\newtheorem{theorem}{Theorem}[section]
\newtheorem{lemma}[theorem]{Lemma}
\theoremstyle{definition}
\newtheorem{example}[theorem]{Example}
\theoremstyle{remark}
\newtheorem{remark}[theorem]{Remark}
\numberwithin{equation}{section}
\begin{document}

\title[Continuity of inner-outer factorization and cross sections]{Continuity of inner-outer factorization and cross sections from invariant subspaces to inner functions}

\author{Bingzhe Hou}
\address{Bingzhe Hou, School of Mathematics, Jilin University, 130012, Changchun, P. R. China}
\email{houbz@jlu.edu.cn}

\author{Yue Xin}
\address{Yue Xin, School of Mathematics, Jilin University, 130012, Changchun, P. R. China}
\email{179929393@qq.com}	
\date{}
\subjclass[2010]{Primary 30J05, 30J10; Secondary 15A60, 15B05.}
\keywords{Inner-outer factorization; bounded analytic functions; cross sections; inner functions; essential supremum norm.}
\thanks{}
\begin{abstract}
Let $H^{\infty}$ be the Banach algebra of bounded analytic functions on the unit open disc $\mathbb{D}$ equipped with the supremum norm. As well known, inner functions play an important role of in the study of bounded analytic functions. In this paper, we are interested in the study of inner functions. Following by the canonical inner-outer factorization decomposition, define $Q_{inn}$ and $Q_{out}$  the maps from $H^{\infty}$ to $\mathfrak{I}$ the set of inner functions and $\mathfrak{F}$ the set of outer functions, respectively. In this paper, we study the $H^{2}$-norm continuity and $H^{\infty}$-norm discontinuity of $Q_{inn}$ and $Q_{out}$ on some subsets of $H^{\infty}$. On the other hand, the Beurling theorem connects invariant subspaces of the multiplication operator $M_z$ and inner functions. We show the nonexistence of continuous cross section from some certain  invariant subspaces to inner functions in the supremum norm. The continuity problem of $Q_{inn}$ and $Q_{out}$ on $\textrm{Hol}(\overline{\mathbb{D}})$, the set of all analytic functions in the closed unit disk, are also considered.
\end{abstract}
\maketitle

\section{Introduction}

Let $\mathbb{D}$ be the unit open disc and $\mathbb{T}$ be the unit circle. Let $H^{\infty}$ be the Banach algebra of bounded analytic functions on $\mathbb{D}$ equipped with the supremum norm ${\Vert}f{\Vert}_{\infty}=\sup\{{\vert}f(z){\vert}; z\in\mathbb{D}\}$. Moreover, denoted by $(H^{\infty})^{-1}$ the set of invertible functions in $H^{\infty}$. Let $L^{\infty}(\mathbb{T})$ (or $L^{\infty}$ in brief) be the collection of all essentially bounded measurable functions on the unit circle $\mathbb{T}$ with regard to the normalized Lebesgue measure of $\mathbb{T}$. $L^{\infty}$ is also a Banach algebra equipped with the the essential supremum norm ${\Vert}g{\Vert}_{L^{\infty}}=\sup_{\xi\in\mathbb{T}}{\rm ess}{\vert} g(z){\vert}$. Then there is an inclusion $i:H^{\infty}\rightarrow L^{\infty}$ defined by
\[
f\rightarrow \widetilde{f}(\mathrm{e}^{\mathbf{i}\theta})=\lim\limits_{r\to 1^{-}} f(r\mathrm{e}^{\mathbf{i}\theta}).
\]
Notice that this inclusion is an isometry, i.e., for any $f\in H^{\infty}$,
\[
\|f\|_{\infty}=\|\widetilde{f}\|_{L^{\infty}}.
\]
Moreover, denoted by $(L^{\infty})^{-1}$ the set of invertible functions in $L^{\infty}$.

A bounded analytic function $u$ on $\mathbb{D}$ is called an inner function if it has unimodular radial limits almost everywhere on $\mathbb{T}$, and denote by $\mathfrak{I}$ the set of all inner functions. As well known, it plays an important role of inner functions in the study of bounded analytic functions, for instance, the Beurling theorem tells us that each invariant subspace $M$ of the classical Hardy space $H^{2}$ under  the multiplication operator $M_z$ there exists an inner function $u$ such that $M=uH^2$.
In addition, a bounded analytic function $F$ on $\mathbb{D}$ is called an outer function if $F$ is a cyclic vector for multiplication operator $M_z$, i.e.,
\[
\bigvee\{z^nF(z); \ n\in \mathbb{N}\}=H^2,
\]
and denote by $\mathfrak{F}$ the set of all outer functions.

For any $f\in H^{\infty}$, there is a canonical inner-outer factorization decomposition $f=uF$, where $F$ is an outer function and $u$ is an inner function which is unique up to a scalar of modulus $1$. Through the present paper, we assume that $u$ is an inner function with $u^{(n_0)}(0)>0$ where $n_0$ is the smallest nonnegative integer such that $u^{(n_0)}(0)$ is non-vanishing, which fixes the choice of the inner function in the inner-outer factorization decomposition and is called a normalized inner function. Furthermore, let $Q_{inn}$ be the mapping
$$
Q_{inn}:H^{\infty}\rightarrow \mathfrak{I}, \ \ \ \ Q_{inn}(f)=u,
$$
and let $Q_{out}$ be the mapping
$$
Q_{out}:H^{\infty}\rightarrow \mathfrak{F}, \ \ \ \ Q_{out}(f)=F.
$$
A natural question is to ask whether the mapping $Q_{inn}$ or $Q_{out}$ is continuous in some certain norm? Unfortunately, V. Kabaila \cite{Kab-1977} showed that neither $Q_{inn}$ nor $Q_{out}$ is continuous in $H^{p}$-norm, $1\le p<\infty$.

In \cite{Doug-1968}, R. Douglas and C. Pearcy made a study on the topology of the invariant subspaces of certain bounded linear operators, where the distance of two invariant subspaces is the norm of the difference of the corresponding orthogonal projections.
If $p$ is a nontrivial projection such that $p(H^2)$ is invariant under $M_z$ multiplication by the coordinate function, then the Beurling theorem gives an inner function
$\varphi$ in $H^{\infty}$ such that $p(H^2)=\varphi H^2$. This $\varphi$ is also unique up to a scalar of modulus $1$, and $p=T_{\varphi}T^*_{\varphi}$, where $T_{\varphi}$ is the Toeplitz operator induced by $\varphi$.
Let $p_t$, $t \in [0, 1]$, be a (operator) norm continuous family of nontrivial projections such that $p_t(H^2)$ is invariant under $M_z$ for each $t$. In this paper, we denote $\varphi_t$ the inner function each $t$ such that $p_t(H^2)=\varphi_t H^2$, and call $\varphi_t$ a cross section of $p_t$ in $\mathfrak{I}$. In addition, we always denote $u_t$ the inner function chosen for each $t$ such that $p_t(H^2)=u_t H^2$ and $u_t^{(n_t)}(0) > 0$, where $n_t$ is the smallest nonnegative integer such that $u_t^{(n_t)}(0)$ is non-vanishing, and call $u_t$ the normalized cross section of $p_t$ in $\mathfrak{I}$.

The components of the set of inner functions has been considered by Herrero in \cite{Her-1974} and \cite{Her-1976}, and by Nestoridis in \cite{Nest79} and \cite{Nest80}.
Let $\mathfrak{I}^{*}$ ($CN^{*}$) be the open set in $H^{\infty}$ of functions of the form $f=uh$, where $u$ is an inner function (Carleson-Newman Blaschke product) and $f\in (H^{\infty})^{-1}$. Notice that a function $f$ belongs to $\mathfrak{I}^{*}$ if and only if  $h\in H^{\infty}$ and $\inf\limits_{\xi\in\mathbb{T}}|\widetilde{f}(\xi)|>0$, that is $f\in H^{\infty}$ and $\widetilde{f}\in(L^{\infty})^{-1}$. A result of Laroco \cite{Lar-1991} asserts that the set $\mathfrak{I}^{*}$ is dense in $H^{\infty}$. A. Nicolau and D. Su\'{a}rez \cite{NS-2010} studied the connected components of $\mathfrak{I}^{*}$ and $CN^{*}$.

In this paper, we will consider the subsets of $\mathfrak{I}^{*}$ with the same multiplicity of the zero point at $0$.
For any $f\in H^{\infty}$, denote by $\textrm{Mul}_0(f)$ be the multiplicity of the zero point of $f$ at $0$, more precisely,
\[
\textrm{Mul}_0(f)=\inf\{n; f^{(n)}(0)\neq 0\}.
\]
Furthermore, we denote
\[
\mathfrak{I}^{*}_n=\{f\in\mathfrak{I}^{*}; \textrm{Mul}_0(f)=n\}, \ \  \ \text{for any} \ n=0,1,2,\ldots.
\]
Notice that
\[
\mathfrak{I}^{*}_n=\{z^nf; f\in\mathfrak{I}^{*}_0\}.
\]

In the present paper, we study the $H^{2}$-norm continuity of $Q_{inn}$ and $Q_{out}$ on $\mathfrak{I}^{*}_n$ and $\mathfrak{I}^{*}$ in section $2$. In section $3$, we study the the $H^{\infty}$-norm discontinuity of $Q_{inn}$ and $Q_{out}$ on $\mathfrak{I}^{*}_n$, and show the nonexistence of continuous cross section from invariant subspaces to inner functions under essential supremum norm. In the final section $4$, we also consider the continuity problem of $Q_{inn}$ and $Q_{out}$ on $\textrm{Hol}(\overline{\mathbb{D}})$, the set of all analytic functions in the closed unit disk.

\section{Continuity of inner-outer factorization}

Following from the proof of Theorem $7.1$ in \cite{Garn-1981}, one can see that if $p\mathbf{1}\neq 0$, where $\mathbf{1}$ is the constant function $1$ in $H^{2}$, then $u=\frac{p\mathbf{1}}{\|p\mathbf{1}\|}$ is the normalized inner function of an orthogonal projection $p$ on an invariant subspace of $M_z$. Consequently, $u$ is $H^{2}$-norm continuous with respect to $p$ for $p\mathbf{1}(0)\neq 0$.

\begin{lemma} [\cite{Garn-1981}, p.~80]\label{pG}
Suppose that $p_t\mathbf{1}(0)\neq 0$ for all $t$. The normalized cross section $u_t$ of $p_t$ in $\mathfrak{I}$ is continuous in $H^{2}$-norm.
\end{lemma}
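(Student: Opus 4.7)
The plan is to use the explicit formula announced just before the lemma, namely that when $p\mathbf{1}\neq 0$ the normalized inner function attached to the invariant subspace $p(H^{2})$ is
\[
u=\frac{p\mathbf{1}}{\|p\mathbf{1}\|},
\]
and then argue that this expression depends continuously on $p$ in the $H^{2}$-norm. The identity itself can be verified directly: since $p=M_{u}M_{u}^{*}$ on $H^{2}$, and for an inner function $u$ one has $M_{u}^{*}\mathbf{1}=\overline{u(0)}\,\mathbf{1}$ (because $\bar u\cdot 1$ has Fourier expansion whose only nonnegative coefficient is $\overline{u(0)}$), it follows that $p\mathbf{1}=\overline{u(0)}\,u$. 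In particular $\|p\mathbf{1}\|=|u(0)|$, and the hypothesis $p\mathbf{1}(0)=|u(0)|^{2}\neq 0$ is equivalent to $u(0)\neq 0$, i.e.\ the normalization index $n_0$ equals $0$. Under the convention $u(0)>0$ one gets $\overline{u(0)}=u(0)>0$, so $p\mathbf{1}=u(0)\,u$ is a positive scalar multiple of $u$, confirming the displayed formula.

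With the formula established, continuity is a soft consequence. First, the hypothesis $p_{t}\mathbf{1}(0)\neq 0$ for every $t$ forces $p_{t}\mathbf{1}\neq 0$ for every $t$, so the denominator $\|p_{t}\mathbf{1}\|$ stays strictly positive. Second, operator-norm continuity of $p_{t}$ gives $\|p_{t}\mathbf{1}-p_{s}\mathbf{1}\|_{H^{2}}\le \|p_{t}-p_{s}\|\cdot\|\mathbf{1}\|_{H^{2}}\to 0$ as $s\to t$, so $t\mapsto p_{t}\mathbf{1}$ is $H^{2}$-norm continuous. Third, the map $v\mapsto v/\|v\|$ is continuous on the complement of the origin in any normed space, since
\[
\Bigl\|\tfrac{v}{\|v\|}-\tfrac{w}{\|w\|}\Bigr\|\le \tfrac{2\|v-w\|}{\|v\|}
\]
for $\|w\|\ge \|v\|/2$. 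Composing these three facts yields $H^{2}$-norm continuity of $t\mapsto u_{t}$.

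I do not anticipate a real obstacle here; the only point that requires care is verifying the explicit formula $u=p\mathbf{1}/\|p\mathbf{1}\|$ and checking that this choice of square root of a positive scalar is compatible with the normalization $u^{(n_0)}(0)>0$ (with $n_0=0$ under our hypothesis). Everything else is a routine continuity argument in $H^{2}$, and the scalar $\|p_{t}\mathbf{1}\|$ is bounded below on any compact subinterval by a continuity/compactness argument, which legitimates dividing by it.
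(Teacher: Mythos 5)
Your proposal is correct and follows exactly the route the paper itself sketches: the paper does not prove this lemma but simply cites Garnett and the identity $u=p\mathbf{1}/\|p\mathbf{1}\|$, and your argument (via $p=T_uT_u^{*}$, $T_u^{*}\mathbf{1}=\overline{u(0)}\mathbf{1}$, hence $p\mathbf{1}=\overline{u(0)}u$, followed by continuity of the normalization map away from the origin) is precisely the standard verification of that formula and of the continuity claim. No gaps; the observation that $p\mathbf{1}(0)=|u(0)|^{2}\neq0$ forces $n_0=0$ and makes the normalization $u(0)>0$ compatible with the positive scalar $\|p\mathbf{1}\|$ is exactly the point that needs checking.
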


Let $\mathfrak{M}$ and $\mathfrak{N}$ be two nontrivial subspaces in a Hilbert space $\mathcal{H}$, $p_\mathfrak{M}$ and $p_\mathfrak{N}$ be the orthogonal projections on $\mathfrak{M}$ and $\mathfrak{N}$, respectively.
The gap (aperture) between subspaces $\mathfrak{M}$ and $\mathfrak{N}$ defined as, e.g., \cite{Doug-1968} and \cite{Kato-1995},
$$
{\rm gap}(\mathfrak{M} ,\mathfrak{N})=\|p_\mathfrak{M}-p_\mathfrak{N}\|=\max\{\|p_\mathfrak{M}p_{\mathfrak{N}^{\bot}}\|, \|p_\mathfrak{N}p_{\mathfrak{M}^{\bot}}\|\}
$$
is used to measure the distance between subspaces.

The maximal angle $\theta_{\rm max}(\mathfrak{M} ,\mathfrak{N})$ between $\mathfrak{M}$ and $\mathfrak{N}$ was introduced in \cite{KKM-1948} and is defined as the angle in
$[0, \pi/2]$ given by
$$
\sin \theta_{\rm max}(\mathfrak{M} ,\mathfrak{N})= \sup\limits_{x\in \mathfrak{M}, \|x\|=1}{\rm dist}(x, \mathfrak{N})=\sup\limits_{x\in \mathfrak{M}, \|x\|=1}\sqrt{1-\|p_\mathfrak{N}(x)\|^2}.
$$

\begin{lemma} [\cite{BS-2010} or \cite{KJA-2010}]\label{top}
	$$
	\|p_\mathfrak{M}-p_\mathfrak{N}\|=\max\{\sin \theta_{\rm max}(\mathfrak{M} ,\mathfrak{N}), \sin \theta_{\rm max}(\mathfrak{N} ,\mathfrak{M})\}.
	$$
\end{lemma}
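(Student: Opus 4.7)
The plan is to reduce the identity to the already-stated formula
\[
\|p_\mathfrak{M}-p_\mathfrak{N}\|=\max\{\|p_\mathfrak{M}p_{\mathfrak{N}^{\bot}}\|,\,\|p_\mathfrak{N}p_{\mathfrak{M}^{\bot}}\|\},
\]
and then to identify each of the two operator norms with the corresponding $\sin\theta_{\max}$. Concretely, I would show that
\[
\|p_\mathfrak{N}p_{\mathfrak{M}^{\bot}}\|=\sin\theta_{\max}(\mathfrak{M},\mathfrak{N})
\qquad\text{and}\qquad
\|p_\mathfrak{M}p_{\mathfrak{N}^{\bot}}\|=\sin\theta_{\max}(\mathfrak{N},\mathfrak{M}),
\]
after which the lemma is immediate from the displayed definition.

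The first step is a passage to adjoints. Since projections are self-adjoint, $(p_{\mathfrak{N}^{\bot}}p_\mathfrak{M})^{*}=p_\mathfrak{M}p_{\mathfrak{N}^{\bot}}$, and so $\|p_\mathfrak{M}p_{\mathfrak{N}^{\bot}}\|=\|p_{\mathfrak{N}^{\bot}}p_\mathfrak{M}\|$. Working with $\|p_{\mathfrak{N}^{\bot}}p_\mathfrak{M}\|$ is convenient because for any $y\in\mathcal{H}$ the vector $p_\mathfrak{M}(y)$ lies in $\mathfrak{M}$ with norm at most $\|y\|$, hence
\[
\|p_{\mathfrak{N}^{\bot}}p_\mathfrak{M}\|=\sup_{x\in\mathfrak{M},\,\|x\|=1}\|p_{\mathfrak{N}^{\bot}}(x)\|,
\]
the upper bound coming from $\|p_\mathfrak{M}(y)\|\le\|y\|$ and the matching lower bound from plugging in $y=x\in\mathfrak{M}$, $\|x\|=1$, for which $p_\mathfrak{M}(y)=y$. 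Applying the Pythagorean identity $\|p_{\mathfrak{N}^{\bot}}(x)\|^{2}=1-\|p_\mathfrak{N}(x)\|^{2}$ for $\|x\|=1$, the right-hand side equals $\sin\theta_{\max}(\mathfrak{N},\mathfrak{M})$ by the definition in the excerpt. Swapping the roles of $\mathfrak{M}$ and $\mathfrak{N}$ yields $\|p_\mathfrak{M}p_{\mathfrak{N}^{\bot}}\|=\sin\theta_{\max}(\mathfrak{N},\mathfrak{M})$ and $\|p_\mathfrak{N}p_{\mathfrak{M}^{\bot}}\|=\sin\theta_{\max}(\mathfrak{M},\mathfrak{N})$.

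Combining these two identifications with the displayed formula for $\|p_\mathfrak{M}-p_\mathfrak{N}\|$ gives the claim. There is really no deep obstacle here; the only place one has to be careful is the direction of the inequality in step two, namely that restricting the sup defining $\|p_{\mathfrak{N}^{\bot}}p_\mathfrak{M}\|$ to inputs already in $\mathfrak{M}$ is not a loss, which is exactly because $p_\mathfrak{M}$ is a contraction with range $\mathfrak{M}$ and fixes $\mathfrak{M}$ pointwise.
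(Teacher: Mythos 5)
The paper does not actually prove this lemma---it is imported wholesale from the cited references [B\"ottcher--Spitkovsky] and [Knyazev--Jujunashvili--Argentati]---so there is no in-paper argument to compare against; what you have written is a correct, self-contained derivation from the two displayed formulas in the surrounding text, and the two key steps (the adjoint identity $\|p_\mathfrak{M}p_{\mathfrak{N}^{\bot}}\|=\|p_{\mathfrak{N}^{\bot}}p_\mathfrak{M}\|$, and the reduction of the supremum to unit vectors of $\mathfrak{M}$ because $p_\mathfrak{M}$ is a contraction onto $\mathfrak{M}$ fixing it pointwise) are both sound. The one blemish is a swap of arguments in your intermediate identifications: the quantity you compute, $\sup_{x\in\mathfrak{M},\,\|x\|=1}\sqrt{1-\|p_\mathfrak{N}(x)\|^{2}}$, is by the paper's definition $\sin\theta_{\rm max}(\mathfrak{M},\mathfrak{N})$, not $\sin\theta_{\rm max}(\mathfrak{N},\mathfrak{M})$; so the correct pairing is $\|p_\mathfrak{M}p_{\mathfrak{N}^{\bot}}\|=\sin\theta_{\rm max}(\mathfrak{M},\mathfrak{N})$ and $\|p_\mathfrak{N}p_{\mathfrak{M}^{\bot}}\|=\sin\theta_{\rm max}(\mathfrak{N},\mathfrak{M})$. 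Because the lemma only asserts equality of the two maxima, this transposition is harmless to the conclusion, but you should fix the labels.
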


\begin{theorem}\label{cont}
For any nonnegative integer $n$, the maps $Q_{inn}:(\mathfrak{I}^{*}_n, \|\cdot\|_{\infty})\rightarrow (\mathfrak{I}, \|\cdot\|_{2})$ and $Q_{out}:(\mathfrak{I}^{*}_n, \|\cdot\|_{\infty})\rightarrow (\mathfrak{F}, \|\cdot\|_{2})$ are continuous in the $H^{2}$-norm.
\end{theorem}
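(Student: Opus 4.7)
The plan is to reduce the theorem to the case $n=0$ and then realize $Q_{inn}$ on $\mathfrak{I}^*_0$ as the composition of the map $f \mapsto p_f$ (the projection onto $\overline{fH^2}$) with the cross-section furnished by Lemma \ref{pG}. For the reduction, if $f = z^n g$ with $g \in \mathfrak{I}^*_0$, then $g = u_g F_g$ gives $f = (z^n u_g) F_g$, and since $(z^n u_g)^{(n)}(0) = n!\, u_g(0) > 0$, this is already the normalized inner-outer factorization of $f$. Hence $Q_{inn}(f) = z^n Q_{inn}(g)$ and $Q_{out}(f) = Q_{out}(g)$. By the maximum modulus principle $\|z^n h\|_\infty = \|h\|_\infty$ on $H^\infty$, so $f \mapsto f/z^n$ is an isometric bijection from $(\mathfrak{I}^*_n, \|\cdot\|_\infty)$ onto $(\mathfrak{I}^*_0, \|\cdot\|_\infty)$, and multiplication by $z^n$ is an $H^2$-isometry; hence the general case follows from $n = 0$.

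For $f \in \mathfrak{I}^*_0$, the condition $\tilde{f} \in (L^\infty)^{-1}$ together with $|\tilde{F}| = |\tilde{f}|$ a.e.\ forces $F \in (H^\infty)^{-1}$, since an outer function is bounded below on $\mathbb{D}$ by the essential infimum of its boundary modulus. Consequently $fH^2 = uH^2$ is already closed, and $p_f = T_u T_u^*$ satisfies $p_f \mathbf{1} = \overline{u(0)}\, u$, so the normalization $u(0) > 0$ gives $Q_{inn}(f) = p_f \mathbf{1}/\|p_f \mathbf{1}\|_2$ with $p_f \mathbf{1}(0) = |u(0)|^2 > 0$. The key step is to show that $f \mapsto p_f$ is operator-norm continuous on $\mathfrak{I}^*_0$. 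Given $\tilde{f} \in \mathfrak{I}^*_0$ close to $f$, any unit vector $x = uy \in uH^2$ with $\|y\|_2 = 1$ may be written as $x = f \cdot (F^{-1} y)$, which yields
\[
\mathrm{dist}(x, \tilde{f}H^2) \leq \|(f - \tilde{f}) F^{-1} y\|_2 \leq \|f - \tilde{f}\|_\infty \, \|F^{-1}\|_\infty.
\]
The symmetric bound involves $\|F_{\tilde{f}}^{-1}\|_\infty$, which remains uniformly bounded for $\tilde{f}$ near $f$, so Lemma \ref{top} delivers $\|p_{\tilde{f}} - p_f\| \to 0$ as $\tilde{f} \to f$.

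Combining this operator-norm continuity with Lemma \ref{pG} gives the $H^2$-continuity of $Q_{inn}$ on $\mathfrak{I}^*_0$. For $Q_{out}$, one exploits the boundary identity $\tilde{F} = \overline{\tilde{u}}\,\tilde{f}$ (valid since $|\tilde{u}| = 1$ a.e.) to write
\[
\tilde{F}_k - \tilde{F} = \overline{\tilde{u}_k}(\tilde{f}_k - \tilde{f}) + (\overline{\tilde{u}_k} - \overline{\tilde{u}})\,\tilde{f},
\]
whose $L^2$-norm is bounded by $\|f_k - f\|_\infty + \|u_k - u\|_2 \, \|f\|_\infty$; both summands tend to $0$ by the just-established continuity of $Q_{inn}$. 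The main technical obstacle is the operator-norm continuity of $p_f$, where the hypothesis $f \in \mathfrak{I}^*$---ensuring invertibility of the outer factor in $H^\infty$---is essential; without it, one could not uniformly bound $\|F^{-1} y\|_2$ and the gap estimate would collapse.
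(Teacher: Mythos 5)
Your proposal is correct and follows essentially the same route as the paper: reduce to $n=0$, use the invertibility of the outer factor to bound the gap $\|p_{\mathfrak{M}}-p_{\mathfrak{N}}\|$ via the maximal-angle formula of Lemma \ref{top}, invoke Lemma \ref{pG} for the $H^2$-continuity of the normalized inner part, and then deduce continuity of $Q_{out}$ from the identity $\tilde F=\overline{\tilde u}\tilde f$ (the paper's rearrangement $f-g=F(u-v)+v(F-G)$ with $\|v(F-G)\|_2=\|F-G\|_2$ is the same estimate). Your explicit verification that $Q_{inn}(z^ng)=z^nQ_{inn}(g)$, that $p_f\mathbf{1}(0)=|u(0)|^2>0$, and that $\|F_{\tilde f}^{-1}\|_\infty$ stays uniformly bounded near $f$ are welcome refinements of details the paper passes over quickly.
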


\begin{proof}
Notice that for any nonnegative integer $n$,
\[
\mathfrak{I}^{*}_n=\{z^nf; f\in\mathfrak{I}^{*}_0\}.
\]
It suffices to consider the case of $n=0$.

Given any element $f$ in $\mathfrak{I}^{*}_0$. There is a canonical decomposition $f=uF$, where $u=Q_{inn}(f)$ is a normalized inner function and $F=Q_{out}(f)$ is an outer function. Since $F$ is an invertible element in $H^{\infty}$, there exist two positive constants $c_{1}$ and $C_{1}$, such that
\begin{equation*}
0<c_{1}\le \|F\|_{\infty}\le C_{1}<+\infty.
\end{equation*}
Similarly, for an element $g$ in $\mathfrak{I}^{*}_0$, we may write $g=vG$,  where $v=Q_{inn}(g)$ and $G=Q_{out}(g)$. Then, there are two positive constants $c_{2}$ and $C_{2}$, such that
\begin{equation*}
0<c_{2}\le \|G\| _{\infty}\le C_{2}<+\infty.
\end{equation*}
Now denote
\[
c=\min\{c_1,c_2\} \ \ \ \ \text{and} \ \ \ \ C=\max\{C_1,C_2\}.
\]
We will prove the continuity of $Q_{inn}:(\mathfrak{I}^{*}_0, \|\cdot\|_{\infty})\rightarrow (\mathfrak{I}, \|\cdot\|_{2})$ and $Q_{out}:(\mathfrak{I}^{*}_0, \|\cdot\|_{\infty})\rightarrow (\mathfrak{F}, \|\cdot\|_{2})$, respectively.

{\bf Part (1) The $H^{2}$-norm continuity of $Q_{inn}$ on $\mathfrak{I}^{*}_0$.}

Denote $\mathfrak{M}$ the nontrivial invariant subspace $u(H^{2})$ of multiplication operator $M_z$  and denote $v(H^{2})$ by $\mathfrak{N}$. Let $p_\mathfrak{M}$ and $p_\mathfrak{N}$ be the corresponding nontrivial orthogonal projections from $H^{2}$ onto $\mathfrak{M}$ and $\mathfrak{N}$, respectively. Since $F$ and $G$ are invertible in $H^{\infty}$, we have
\[
p_\mathfrak{M} (H^2)=uH^2=fH^2 \ \ \ \ \text{and} \ \ \ \ p_\mathfrak{N} (H^2)=vH^2=gH^2.
\]

For any element $\varphi\in \mathfrak{M}$  with $\|\varphi\|_{2}=1$, there is an element $h\in H^2$ such that $\varphi=uh$. Moreover, $\|h\|_{2}=1$ and $\varphi=uF\cdot\frac{h}{F}=f\cdot\frac{h}{F}$. Since $g\cdot\frac{h}{F}$ is an element in the subspace $\mathfrak{N}$, we have
\begin{align*}
\textrm{dist}(\varphi,\mathfrak{N})\triangleq& \inf\{\|\varphi-w\|_2;~w\in \mathfrak{N}\} \\
\le& \|\varphi-g\cdot\frac{h}{F}\|_2 \\
=&\|f\cdot\frac{h}{F}-g\cdot\frac{h}{F}\|_{2}\\
\le&\|f-g\|_{\infty}\|\frac{1}{F}\|_{\infty}\|h\|_{2}\\
\le& \frac{1}{c}\cdot\|f-g\|_{\infty}.
\end{align*}
By the arbitrariness of $\varphi$, we have
\begin{align*}
	\sup_{\varphi\in \mathfrak{M},\ \|\varphi\|_2=1} \textrm{dist}(\varphi,\mathfrak{N})\le \frac{1}{c}\cdot\|f-g\|_{\infty}.
\end{align*}
Similarly, we have
 \begin{align*}
 		\sup_{\psi\in \mathfrak{N},\ \|\psi\|_2=1} \textrm{dist}(\psi,\mathfrak{M})\le \frac{1}{c}\cdot\|f-g\|_{\infty}.
 \end{align*}

By Lemma \ref{top}, one can see that
\begin{equation}\label{eqpmpn}
		\|p_\mathfrak{M}-p_\mathfrak{N}\|=\max\{\sin \theta_{\rm max}(\mathfrak{M}, \mathfrak{N}), \sin \theta_{\rm max}(\mathfrak{N},\mathfrak{M})\}\le \frac{1}{c}\cdot\|f-g\|_{\infty}.
\end{equation}

Given any $\epsilon>0$. It follows from Lemma \ref{pG} that there is a positive number $\epsilon'$ such that  when $\|p_\mathfrak{M}-p_\mathfrak{N}\|\le \epsilon'$, we have $\|u-v\|_{2}\le\epsilon$.

Let $\delta=c\epsilon'$. Then, when $\|f-g\|_{\infty}\le\delta$, we have
\begin{align*}
	\|p_\mathfrak{M}-p_\mathfrak{N}\|\le \frac{\delta}{c}= \frac{c\epsilon'}{c}=\epsilon'
\end{align*}
and consequently
\begin{align*}
	\|u-v\|_{2}\le\epsilon.
\end{align*}
Therefore, the map $Q_{inn}:(\mathfrak{I}^{*}_0, \|\cdot\|_{\infty})\rightarrow (\mathfrak{I}, \|\cdot\|_{2})$ is continuous in $H^{2}$-norm.

{\bf Part (2) The $H^{2}$-norm continuity of $Q_{out}$ on $\mathfrak{I}^{*}_0$.}

Since
\begin{align*}
f-g=&uF-vG\\
=&uF-vF+vF-vG\\
=&F(u-v)+v(F-G),
\end{align*}
we have
	\begin{align*}
\|F-G\|_{2}&=\|v(F-G)\|_{2} \\
&\le\|f-g\|_{2} +\|F(u-v)\|_{2} \\
&\le\|f-g\|_{\infty} +\|F\|_{\infty} \|u-v\|_{2} \\
&\le\|f-g\|_{\infty} +C\|u-v\|_{2}.
    \end{align*}
Given any $\eta>0$. It follows from Part (1) that there is a positive number $\delta<\frac{\eta}{2}$ such that  when $\|f-g\|_{\infty}\le \delta$, we have $\|u-v\|_2\le\frac{\eta}{2C}$. Then, we have
\[
	\|F-G\|_{2}\le \delta+C\cdot\frac{\eta}{2C}\le\frac{\eta}{2}+\frac{\eta}{2}=\eta.
\]
Therefore, the map $Q_{out}:(\mathfrak{I}^{*}_0, \|\cdot\|_{\infty})\rightarrow (\mathfrak{F}, \|\cdot\|_{2})$ is continuous in $H^{2}$-norm.
\end{proof}

In the above theorem, the setting of $\mathfrak{I}^{*}_n$ is necessary. Neither $Q_{inn}:(H^{p}, \|\cdot\|_{p})\rightarrow (\mathfrak{I}, \|\cdot\|_{p})$ nor $Q_{out}:(H^{p}, \|\cdot\|_{p})\rightarrow (\mathfrak{F}, \|\cdot\|_{p})$ is continuous, $1\le p\le\infty$. Kabaila \cite{Kab-1977} proved the discontinuity for the case of $1\le p<\infty$. The discontinuity for the case of $p=\infty$ could be found in Corollary $3$ in \cite{Nak-1996} by Nakazi. Now, we will give a simple example to show that neither $Q_{inn}:(\mathfrak{I}^{*}, \|\cdot\|_{\infty})\rightarrow (\mathfrak{I}, \|\cdot\|_{p})$ nor $Q_{out}:(\mathfrak{I}^{*}, \|\cdot\|_{\infty})\rightarrow (\mathfrak{F}, \|\cdot\|_{p})$ is continuous for any $1\le p\le\infty$, which implies that $\mathfrak{I}^{*}_n$ could not be extended to $\mathfrak{I}^{*}$ in the Theorem \ref{cont}.

\begin{example}\label{I*n}
Let $f_t(z)=\frac{t-z}{1-tz}$ for $t\in[-1,1]$. Obviously, $f_t(z)$ is a continuous path in $\mathfrak{I}^{*}$. However,
\[
Q_{inn}(f_t)=\left\{\begin{array}{cc}
-\frac{t-z}{1-tz} \ \ \ &\text{for} \ t\in[-1,0], \\
\frac{t-z}{1-tz} \ \ \ &\text{for} \ t\in(0,1].
\end{array}\right.
\]
and
\[
Q_{out}(f_t)=\left\{\begin{array}{cc}
-1 \ \ \ &\text{for} \ t\in[-1,0], \\
1 \ \ \ &\text{for} \ t\in(0,1].
\end{array}\right.
\]
Then, neither $Q_{inn}(f_t)$ nor $Q_{out}(f_t)$ is continuous at $t=0$ in $H^{p}$-norm, for any $1\le p\le\infty$.
\end{example}

\section{Nonexistence of continuous cross sections from invariant subspaces to inner functions in the supremum norm}

In this section, we will show the discontinuity of $Q_{inn}:(\mathfrak{I}^{*}_n, \|\cdot\|_{\infty})\rightarrow (\mathfrak{I}, \|\cdot\|_{\infty})$ and $Q_{out}:(\mathfrak{I}^{*}_n, \|\cdot\|_{\infty})\rightarrow (\mathfrak{F}, \|\cdot\|_{\infty})$. Moreover, we will illustrate the nonexistence of continuous cross section from invariant subspaces to inner functions in the supremum norm. To prove those, we need some lemmas as preliminaries.

Firstly, Nicolau and Su\'{a}rez have obtained a result with regard to path in $\mathfrak{I}^{*}$ in the supremum norm.

\begin{lemma}[Proposition 4.5 in \cite{NS-2010}]\label{NS}
Let $f$, $g\in\mathfrak{I}^{*}$. Then there is a normalized inner function $b$ (in fact, it is a CNBP) such that $bf$ and $bg$ can be joined by a polygonal path contained in $\mathfrak{I}^{*}$ in the supremum norm. Moreover, if $f$, $g\in CN^{*}$, then $b$ can be chosen such that $bf$ and $bg$ can be joined by a polygonal path contained in $CN^{*}$.
\end{lemma}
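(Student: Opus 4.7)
The plan is to exploit the freedom to multiply $f,g$ by an inner function to simplify their inner factors, and then to construct the polygonal path directly. First I would decompose $f=u_f F_f$ and $g=u_g F_g$ in inner-outer form, noting that since $f,g\in\mathfrak{I}^{*}$ the outer factors $F_f,F_g$ are invertible in $H^{\infty}$, so in particular $0<c\le|F_f|,|F_g|\le C$ essentially on $\mathbb{T}$. Next, I would invoke a theorem of Tolokonnikov (or Marshall): for any inner function $u$ there is an interpolating Blaschke product $c$ such that $cu$ is also an interpolating Blaschke product. Applying this to both $u_f$ and $u_g$ and multiplying the resulting Blaschke products together produces a CNBP $b$ with the property that $bu_f$ and $bu_g$ are themselves CNBPs; thus $bf,bg\in CN^{*}$, which simultaneously handles the refinement in the second sentence of the statement.

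Once inside $CN^{*}$, the idea is to connect $bf$ and $bg$ by a short polygonal path built from finitely many straight-line segments, each of which is verified to lie in $\mathfrak{I}^{*}$ by quantitative lower bounds on the boundary values. A natural blueprint is to go through a vertex obtained by adding a sufficiently large constant $\lambda\in\mathbb{C}$, giving a path of the form $bf\to bf+\lambda\to bg+\lambda\to bg$. For the middle segment this is automatic if $|\lambda|>\|f\|_{\infty}+\|g\|_{\infty}$, since then $|(1-t)(bf+\lambda)+t(bg+\lambda)|\ge|\lambda|-\max(\|f\|_{\infty},\|g\|_{\infty})>0$ on $\mathbb{T}$. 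For the endpoint segments, one must choose $\arg\lambda$ so that the segment $\{-s\lambda:0\le s\le 1\}\subset\mathbb{C}$ avoids the essential range of $bf$ (and of $bg$) on $\mathbb{T}$; when no single direction suffices, one subdivides by using several intermediate vertices of the same form, each chosen to remove one obstruction.

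The main obstacle is precisely the endpoint-segment avoidance: because multiplication by $b$ does not change $|f|,|g|$ on $\mathbb{T}$, the essential range of $\arg(bf)$ on $\mathbb{T}$ can in principle wrap around the whole circle (as happens for infinite Blaschke products), so no single straight-line attempt is automatically safe. The resolution is to exploit the additional freedom of further multiplying $b$ by an extra CNBP to redistribute arguments, together with a finite number of subdivisions as above. Carrying this out rigorously is the content of Proposition~4.5 in \cite{NS-2010}, to which we defer for the full technical details.
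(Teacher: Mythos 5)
First, note that the paper offers no proof of this lemma: it is quoted verbatim from Proposition 4.5 of \cite{NS-2010}, so your attempt can only be judged as a standalone argument, and as such it has two genuine gaps. The first is the appeal to a ``Tolokonnikov/Marshall'' theorem asserting that for every inner function $u$ there is an interpolating Blaschke product $c$ with $cu$ again an interpolating Blaschke product. No such theorem can hold: if $u$ carries a nontrivial singular inner factor $S_\mu$, then $cu$ carries the singular factor $S_{\mu+\mu'}$ with $\mu+\mu'\ge\mu$, so $cu$ is not a Blaschke product of any kind, let alone an interpolating one; and singular inner functions do lie in $\mathfrak{I}^{*}$, so this case cannot be excluded. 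In particular your claim that $b$ can always be chosen so that $bf,bg\in CN^{*}$ for arbitrary $f,g\in\mathfrak{I}^{*}$ is false; had it been true, the second sentence of the lemma (which asserts the $CN^{*}$ conclusion only under the extra hypothesis $f,g\in CN^{*}$) would be redundant, and that should have been a warning sign.

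The second gap is that the step you correctly identify as the main obstacle --- choosing the direction $\lambda$ so that each function $bf+s\lambda$, $s\in[0,1]$, stays essentially bounded away from $0$ on $\mathbb{T}$, i.e.\ so that the segment $\{-s\lambda: 0\le s\le 1\}$ avoids the essential range of the boundary function of $bf$ --- is precisely where all the work lies, and your sketch does not supply it. Multiplication by an inner $b$ leaves $|\widetilde{f}|$ unchanged on $\mathbb{T}$, and the essential range of $\widetilde{bf}$ can fill a whole annulus, so no single direction, and no finite list of ``subdivisions,'' is automatically available; the choice of $b$ that makes a polygonal path possible is the entire content of Proposition 4.5 in \cite{NS-2010}. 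Since your final sentence defers to that proposition for the technical details, the proposal is circular as a proof; as a citation it coincides with what the paper itself does, namely quote the result without proof.
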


For a polygonal path, we have an intuitive observation as follows.
\begin{lemma}\label{polypath}
Let $\gamma(t):[0,1]\rightarrow \mathbb{C}$ be a polygonal path. Then for any $\mathrm{e}^{\mathbf{i}\eta}$ which is not parallel to any segment of the polygonal path $\gamma$, there exists $\epsilon_0>0$ such that for any $0<\epsilon\le\epsilon_0$, the path $\gamma(t)+\epsilon\mathrm{e}^{\mathbf{i}\eta}$ does not pass through $0$.
\end{lemma}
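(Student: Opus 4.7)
The plan is to observe that $\gamma(t) + \epsilon \mathrm{e}^{\mathbf{i}\eta} = 0$ for some $t$ if and only if $-\epsilon \mathrm{e}^{\mathbf{i}\eta}$ lies on the image of $\gamma$. Since $\gamma$ is polygonal, its image is a finite union of closed line segments $S_1, \ldots, S_k$, so the set of ``bad'' shift parameters $\epsilon > 0$ is determined by finitely many segment-versus-line intersections. I would therefore analyze, segment by segment, the intersection of $S_j$ with the real line $L = \{s\mathrm{e}^{\mathbf{i}\eta} : s \in \mathbb{R}\}$ through the origin.

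The hypothesis that $\mathrm{e}^{\mathbf{i}\eta}$ is not parallel to any segment of $\gamma$ is precisely what makes each $S_j$ non-parallel to $L$. Two non-parallel segments in $\mathbb{C}$ meet in at most one point, so for each $j$ there is at most one real number $s_j$ with $S_j \cap L = \{s_j \mathrm{e}^{\mathbf{i}\eta}\}$. The shifted path passes through $0$ for a particular $\epsilon > 0$ exactly when $-\epsilon$ equals one of the $s_j$'s; equivalently, $\epsilon$ belongs to the finite (possibly empty) set $\{-s_j : s_j < 0\} \subset (0, \infty)$. Choosing any positive $\epsilon_0$ strictly less than $\min\{-s_j : s_j < 0\}$, or any positive number whatsoever if this set is empty, would then complete the argument.

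The only point that requires a moment of care is the case when $0 \in \gamma([0,1])$, i.e., when some $s_j = 0$. One might initially worry that arbitrarily small shifts could keep forcing the shifted path through $0$, but the non-parallel condition guarantees that $S_j \cap L$ is the single point $\{0\}$ rather than a subsegment, so such a $j$ contributes $s_j = 0$, which is harmlessly excluded from the bad set since we require $\epsilon > 0$. I do not anticipate any serious obstacle; the lemma reduces to elementary planar geometry applied to finitely many segments, and the non-parallel hypothesis is exactly what rules out the degenerate possibility of $L$ overlapping a segment in more than a point.
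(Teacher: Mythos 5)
Your proof is correct and takes essentially the same approach as the paper, which merely asserts the conclusion ``from the figure'' for a polygonal path made of finitely many segments. Your version actually supplies the missing detail: each segment, being non-parallel to the line $\{s\mathrm{e}^{\mathbf{i}\eta}: s\in\mathbb{R}\}$, meets it in at most one point, so only finitely many values of $\epsilon>0$ are excluded and any $\epsilon_0$ below the smallest of them works.
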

\begin{proof}
The proof is simple. Since a polygonal path is composed of finite segments, one can see the conclusion from the following figure.
\begin{figure}[htbp]
\centering
\includegraphics[width=0.7\textwidth]{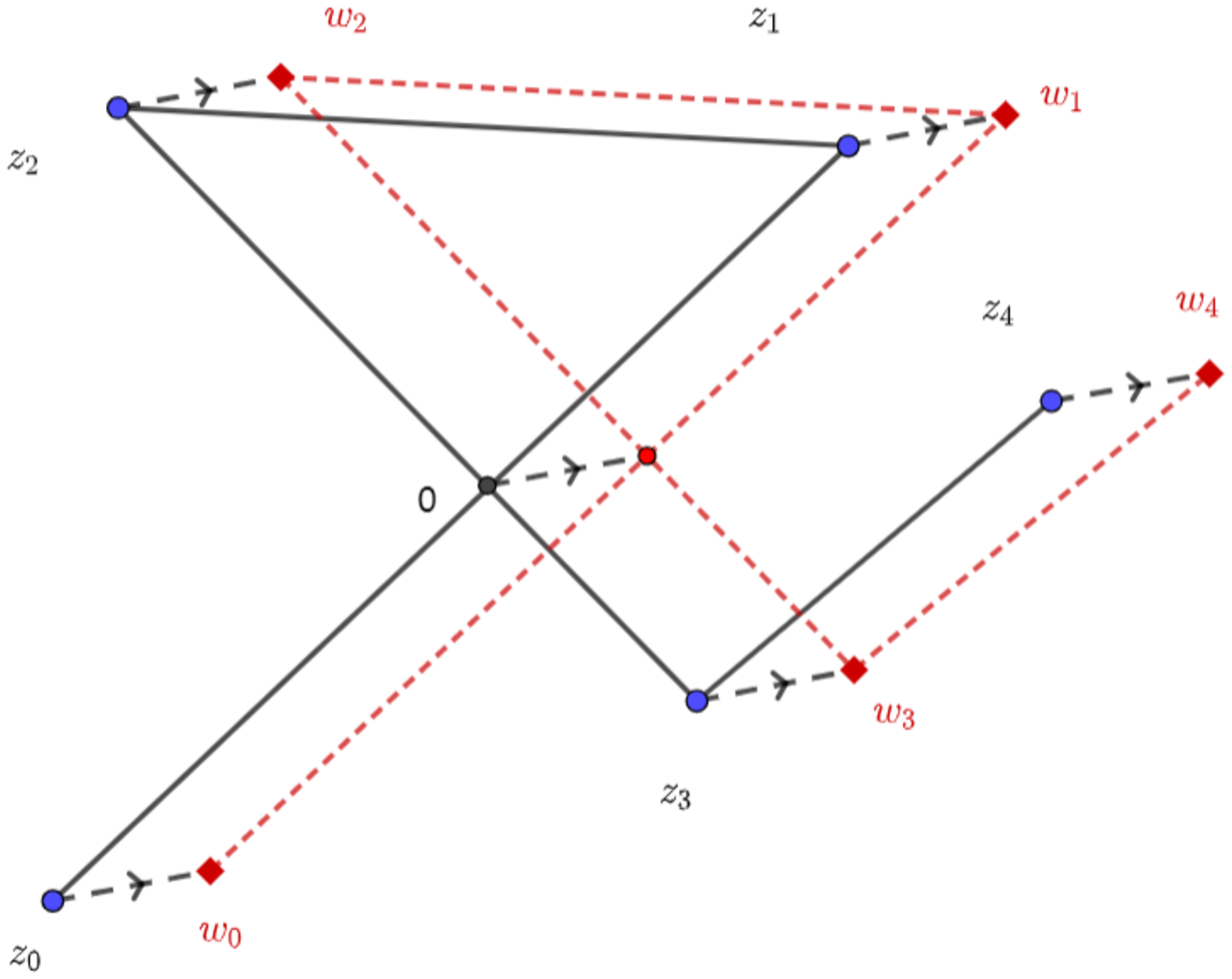}
\caption{}
\label{figure1}
\end{figure}
\end{proof}

Following from the above simple lemma, we could strengthen the conclusion of Nicolau and Su\'{a}rez (Lemma \ref{NS}) from $\mathfrak{I}^{*}$ to $\mathfrak{I}^{*}_0$.

\begin{lemma}\label{trans}
Let $h_0, h_1\in (H^{\infty})^{-1}$. Then there exists a normalized inner function $\varphi$ such that $\varphi h_0$ and $\varphi h_1$ can be joined by a path contained in $\mathfrak{I}^{*}_0$ in the supremum norm.
\end{lemma}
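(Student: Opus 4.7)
My approach is to apply Lemma~\ref{NS} to obtain a polygonal path from $bh_0$ to $bh_1$ in $\mathfrak{I}^*$, and then perturb its interior vertices, guided by Lemma~\ref{polypath}, so that the evaluation at $z=0$ along the perturbed path never hits $0$. I will take $\varphi = b$, and since $h_0,h_1\in (H^\infty)^{-1}$ are everywhere non-vanishing, ensuring $\varphi h_0,\varphi h_1\in\mathfrak{I}^*_0$ reduces to the condition $b(0)\neq 0$.

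\textbf{Construction.} Apply Lemma~\ref{NS} to $h_0,h_1\in(H^\infty)^{-1}\subset\mathfrak{I}^*$ to get a normalized CNBP $b$ and a polygonal path $p:[0,1]\to\mathfrak{I}^*$ with vertices $f_0 = bh_0,f_1,\ldots,f_n = bh_1$. Set $v_i := f_i(0)$ and $\gamma(t):=p(t)(0)$, a polygonal path in $\mathbb{C}$ through $v_0,\ldots,v_n$; under $b(0)\neq 0$ the endpoints $v_0,v_n$ are nonzero. If $\gamma$ already avoids $0$ we are done with $p$ itself. Otherwise, pick a direction $\mathrm{e}^{\mathbf{i}\eta}$ not parallel to any segment of $\gamma$ and not parallel to the vectors $v_0, v_n$; only finitely many directions are excluded. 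For small $\epsilon>0$, replace each interior vertex $f_i$ ($1\le i\le n-1$) by $f_i+\epsilon\mathrm{e}^{\mathbf{i}\eta}$, keeping $f_0,f_n$ fixed, to obtain a new polygonal path $\tilde p$ with $\|\tilde p(t)-p(t)\|_\infty\le\epsilon$. The openness of $\mathfrak{I}^*$ in $H^\infty$ together with the compactness of $p([0,1])$ forces $\tilde p(t)\in\mathfrak{I}^*$ for all $t$ once $\epsilon$ is small enough. The corresponding $\tilde\gamma(t):=\tilde p(t)(0)$ has each interior-to-interior segment equal to a translate of the corresponding segment of $\gamma$ by $\epsilon\mathrm{e}^{\mathbf{i}\eta}$, so Lemma~\ref{polypath} makes these segments miss $0$; the first segment $v_0\to v_1+\epsilon\mathrm{e}^{\mathbf{i}\eta}$ and the last segment $v_{n-1}+\epsilon\mathrm{e}^{\mathbf{i}\eta}\to v_n$ each join a fixed nonzero endpoint to a shifted one, and a short calculation shows they too avoid $0$ provided $\mathrm{e}^{\mathbf{i}\eta}$ is not parallel to $v_0$ or $v_n$ and $\epsilon$ is small. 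Hence $\tilde p$ lies entirely in $\mathfrak{I}^*_0$ and joins $\varphi h_0$ to $\varphi h_1$.

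\textbf{Main obstacle.} The delicate step I expect is arranging $b(0)\neq 0$: Lemma~\ref{NS} guarantees only the existence of some normalized CNBP $b$ and does not directly control $b(0)$. The natural remedy is to look inside the proof of Proposition~4.5 of \cite{NS-2010}, where $b$ is a Blaschke product whose zero sequence may be prescribed so as to avoid the origin, allowing us to assume $b(0)\neq 0$ from the outset. Absent such flexibility, one would need an independent reduction (e.g.\ a preliminary homotopy inside $\mathfrak{I}^*$ or an explicit modification of $b$) to bring us into the case $b(0)\neq 0$ used above.
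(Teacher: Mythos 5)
Your overall strategy --- get a polygonal path from Lemma~\ref{NS}, then use Lemma~\ref{polypath} to nudge the path so that its evaluation at $z=0$ never vanishes --- is the same as the paper's, and the perturbation half of your argument is sound in substance (the paper translates the whole path by a constant $\epsilon\mathrm{e}^{\mathbf{i}\eta}$ and appends two short segments at the ends, whereas you shear only the interior vertices; either variant can be made to work, and your use of the openness of $\mathfrak{I}^{*}$ plus compactness to keep the perturbed path in $\mathfrak{I}^{*}$ is fine).

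The genuine gap is exactly the one you flag yourself and then leave unresolved: the case $b(0)=0$. Lemma~\ref{NS} gives no control over $b(0)$, and if $b(0)=0$ then $bh_0,bh_1\notin\mathfrak{I}^{*}_0$, so taking $\varphi=b$ fails and no perturbation of the interior of the path can fix endpoints that are themselves outside $\mathfrak{I}^{*}_0$. Deferring to the internals of Proposition~4.5 of \cite{NS-2010}, or invoking an unspecified ``preliminary homotopy,'' does not close this; it is the nontrivial step of the lemma. The paper's resolution is a Frostman-type shift: fix $r\in(0,1)$ and set $\varphi=\frac{b+r}{1+rb}$, which is again inner with $\varphi(0)=\frac{b(0)+r}{1+rb(0)}>0$ (using that a normalized $b$ has $b(0)=0$ or $b(0)>0$), and the family $g_t=\frac{b+(1-t)r}{1+(1-t)rb}$ is a supremum-norm continuous path of inner functions from $\varphi$ to $b$. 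Multiplying by $h_0$ and $h_1$ then connects $\varphi h_0$ to $bh_0$ and $bh_1$ to $\varphi h_1$ inside $\mathfrak{I}^{*}$, and the concatenated path still evaluates at $0$ to a (finite union of) segments, so the shift-by-$\epsilon\mathrm{e}^{\mathbf{i}\eta}$ argument applies to the whole concatenation. Without this (or an equivalent) device, your proof only covers the case $b(0)\neq 0$.
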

\begin{proof}
By Lemma \ref{NS}, there is a normalized inner function $b$ such that $bh_0$ and $bh_1$ can be joined by a polygonal path $f_t(z)$ in $\mathfrak{I}^{*}$, $t\in[0,1]$. Write $f_t=u_th_t$, where $u_t$ is the normalized inner function part of $f_t$ and $h_t$ is the outer function part of $f_t$.

Whenever $b(0)=0$ or $b(0)>0$, for a fixed number $r\in(0,1)$, we have $b(0)+r>0$. Let $\varphi(z)=\frac{b(z)+r}{1+rb(z)}$. Then, $\varphi(0)>0$ and
\[
g_t(z)=\frac{b(z)+(1-t)r}{1+(1-t)rb(z)}, \ \ \ \ t\in [0,1],
\]
is a continuous path from $\varphi$ to $b$ in $\mathfrak{I}$ in the supremum norm, and
\[
\widehat{g}_t(z)=\frac{b(z)+tr}{1+trb(z)}, \ \ \ \ t\in [0,1],
\]
is the inverse path of $g_t$. Furthermore, let
\[
\widetilde{f}_t(z)=\left\{\begin{array}{cc}
g_{4t}(z)h_0(z) \ \ \ &\text{for} \ t\in[0,\frac{1}{4}], \\
f_{4t-1}(z) \ \ \ &\text{for} \ t\in[\frac{1}{4},\frac{1}{2}], \\
\widehat{g}_{2t-1}(z)h_1(z) \ \ \ &\text{for} \ t\in[\frac{1}{2},1].
\end{array}\right.
\]
Then, $\widetilde{f}_t(z)$ is a path from $\varphi h_0$ to $\varphi h_1$ in $\mathfrak{I}^{*}$. Since the unit closed interval is compact, there is a positive number $\epsilon_0$ such that
\[
0<\epsilon_0<\inf\limits_{\xi\in\mathbb{T}}\textrm{ess}\widetilde{f}_t(\xi) \ \ \  \ \text{for all} \ t\in[0,1].
\]
Moreover, let $\gamma(t)=\widetilde{f}_t(0)$, $t\in[0,1]$. By Lemma \ref{NS} and the construction of $\widetilde{f}_t(z)$, one can see that $\gamma(t)$ is a polygonal path in $\mathbb{C}$. By Lemma \ref{polypath}, there exist a number $\eta$ and a positive number $\epsilon$ with $0<\epsilon<\min\{\epsilon_0, |\varphi(0)h_0(0)|, |\varphi(0)h_1(0)|\}$ such that the path $\gamma(t)+\epsilon\mathrm{e}^{\mathbf{i}\eta}$ does not pass through $0$. Consequently,
\[
\widetilde{f}_t(z)+\epsilon\mathrm{e}^{\mathbf{i}\eta}
\]
is a path from $\varphi h_0+\epsilon\mathrm{e}^{\mathbf{i}\eta}$ to $\varphi h_1+\epsilon\mathrm{e}^{\mathbf{i}\eta}$ in $\mathfrak{I}^{*}_0$. Notice that
$\varphi h_0+t\epsilon\mathrm{e}^{\mathbf{i}\eta}$, $t\in[0,1]$, is a path from $\varphi h_0$ to $\varphi h_0+\epsilon\mathrm{e}^{\mathbf{i}\eta}$ in $\mathfrak{I}^{*}_0$, and $\varphi h_1+(1-t)\epsilon\mathrm{e}^{\mathbf{i}\eta}$, $t\in[0,1]$, is a path from $\varphi h_1+\epsilon\mathrm{e}^{\mathbf{i}\eta}$ to $\varphi h_1$ in $\mathfrak{I}^{*}_0$.
Then, we could obtain a path from $\varphi h_0$ to $\varphi h_1$ in $\mathfrak{I}^{*}_0$.
\end{proof}

\begin{theorem}\label{nocont}
Neither $Q_{inn}:(\mathfrak{I}^{*}_n, \|\cdot\|_{\infty})\rightarrow (\mathfrak{I}, \|\cdot\|_{\infty})$ nor $Q_{out}:(\mathfrak{I}^{*}_n, \|\cdot\|_{\infty})\rightarrow (\mathfrak{F}, \|\cdot\|_{\infty})$ are continuous in the supremum norm.
\end{theorem}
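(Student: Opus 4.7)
My approach is proof by contradiction, leveraging Lemma \ref{trans}. First, I reduce the statement to the case $n=0$: multiplication by $z^n$ is a sup-norm isometry from $\mathfrak{I}^{*}_0$ onto $\mathfrak{I}^{*}_n$ that carries $Q_{inn}$ to $z^n Q_{inn}$ and $Q_{out}$ to $Q_{out}$, so discontinuity on $\mathfrak{I}^{*}_n$ is equivalent to discontinuity on $\mathfrak{I}^{*}_0$. Next, since for any $f\in\mathfrak{I}^{*}_0$ the inner part satisfies $|Q_{inn}(f)|=1$ on $\mathbb{T}$ and the outer part $Q_{out}(f)$ is uniformly invertible in $H^{\infty}$ on small sup-norm neighbourhoods, the identity
\[
f_n-f=Q_{inn}(f_n)(Q_{out}(f_n)-Q_{out}(f))+(Q_{inn}(f_n)-Q_{inn}(f))Q_{out}(f)
\]
together with the triangle inequality on $\mathbb{T}$ yields mutual sup-norm control, so the continuity of $Q_{inn}$ on $\mathfrak{I}^{*}_0$ is equivalent to that of $Q_{out}$. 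It therefore suffices to show $Q_{inn}$ is discontinuous on $\mathfrak{I}^{*}_0$.

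Assume for contradiction that $Q_{inn}\colon(\mathfrak{I}^{*}_0,\|\cdot\|_\infty)\to(\mathfrak{I},\|\cdot\|_\infty)$ is continuous. Given any $h_0,h_1\in(H^{\infty})^{-1}$, Lemma \ref{trans} yields a normalized inner $\varphi$ and a sup-norm continuous path $\widetilde f_t\colon[0,1]\to\mathfrak{I}^{*}_0$ from $\varphi h_0$ to $\varphi h_1$, and the assumption forces $Q_{inn}(\widetilde f_t)$ to be a sup-norm continuous loop in $\mathfrak{I}$ based at $\varphi$. The contradiction would follow by choosing $h_0,h_1$ so that this loop must jump by a nontrivial Blaschke factor at some intermediate parameter, since for every nontrivial Blaschke factor $B$ one has $\|1-B\|_\infty=2$ (attained on $\mathbb{T}$ where $B$ traces the unit circle), producing an unavoidable jump of $2$ in the normalized inner factor.

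Concretely, I would track the inner factor along the middle polygonal subsegment of $\widetilde f_t$ from Lemma \ref{trans}: functions there have the form $bH_\tau+\epsilon\mathrm{e}^{\mathbf{i}\eta}$ with $H_\tau$ a convex combination of $h_0$ and $h_1$, and one would engineer the zero equation $b(z)H_{\tau_0}(z)=-\epsilon\mathrm{e}^{\mathbf{i}\eta}$ to have a solution precisely on $\partial\mathbb{D}$ at some $\tau_0$, so that as $\tau$ crosses $\tau_0$ a zero is injected into or expelled from $\mathbb{D}$ and the normalized inner part jumps by a Blaschke factor. The hard part is rigorously producing such a boundary-zero crossing: since every $\widetilde f_t\in\mathfrak{I}^{*}$ is bounded away from $0$ on $\mathbb{T}$, the zeros of $\widetilde f_t+\epsilon\mathrm{e}^{\mathbf{i}\eta}$ all lie strictly inside $\mathbb{D}$ for small $\epsilon$ and depend continuously on $\tau$, so the jump cannot come from a single isolated finite-Blaschke zero. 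One must instead exploit the infinite Carleson--Newman zero structure of $b$ coming from Lemma \ref{NS} (whose rearrangements are not $\|\cdot\|_\infty$-continuous even when the individual zeros vary continuously), or else bypass Lemma \ref{trans} entirely and construct a direct sequence $f_n\to f$ in $\mathfrak{I}^{*}_0$ whose singular or infinite-Blaschke inner components fail to converge in sup norm, in the spirit of Kabaila \cite{Kab-1977} and Nakazi \cite{Nak-1996}; the latter exploits the well-known discontinuity of the outer-extension map $|h|_{\mathbb{T}}\mapsto h$ from $L^\infty(\mathbb{T})$ to $H^\infty$, a consequence of the unboundedness of the harmonic conjugate on $L^\infty$.
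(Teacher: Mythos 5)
You have assembled most of the right scaffolding --- the reduction to $n=0$, the identity $f_s-f_t=F_s(u_s-u_t)+u_t(F_s-F_t)$ showing (via $|u_t|=1$ on $\mathbb{T}$ and the uniform bounds on the outer parts) that continuity of $Q_{inn}$ along a path in $\mathfrak{I}^{*}_0$ forces continuity of $Q_{out}$ along that path, and the invocation of Lemma \ref{trans} --- but there is a genuine gap at the point where the contradiction is supposed to appear, and you have aimed at the wrong target. The contradiction does not come from the \emph{inner} factor being forced to ``jump by a Blaschke factor''; as you yourself observe, that route fails because every $\widetilde f_t\in\mathfrak{I}^{*}$ is bounded away from $0$ on $\mathbb{T}$, so no boundary-zero crossing can be engineered, and your fallback suggestions (exploiting Carleson--Newman zero structure, or abandoning Lemma \ref{trans} for an ad hoc sequence) are left entirely unexecuted. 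The missing idea is much simpler: choose $h_0$ and $h_1$ in \emph{different connected components} of $(H^{\infty})^{-1}$, e.g.\ $h_0=\mathbf{1}$ and $h_1(z)=\mathrm{e}^{\frac{2\mathbf{i}}{\pi}\log\frac{1+z}{1-z}}$, which cannot be joined by any path inside $(H^{\infty})^{-1}$. Since every $\widetilde f_t$ lies in $\mathfrak{I}^{*}$, its outer part $Q_{out}(\widetilde f_t)$ automatically lies in $(H^{\infty})^{-1}$, and $Q_{out}(\widetilde f_0)=h_0$, $Q_{out}(\widetilde f_1)=h_1$ because $\varphi$ is already the normalized inner factor of $\varphi h_j$. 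Hence continuity of $Q_{out}$ (directly, or as a consequence of continuity of $Q_{inn}$ via your own estimate) would produce a continuous path from $h_0$ to $h_1$ inside $(H^{\infty})^{-1}$ --- an immediate contradiction with the disconnectedness of $(H^{\infty})^{-1}$. This is exactly the paper's argument; no examination of how the zeros or singular measures of the inner factors move is needed. As written, your proposal does not close, because for an \emph{arbitrary} pair $h_0,h_1\in(H^{\infty})^{-1}$ (e.g.\ $h_0=h_1=\mathbf{1}$) there is nothing to contradict: the constant loop $Q_{inn}(\widetilde f_t)\equiv\varphi$ is perfectly continuous.
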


\begin{proof}
It suffices to consider the case of $n=0$.

As well known, there exist $h_0, h_1\in (H^{\infty})^{-1}$ such that they can not be joined by a path in $(H^{\infty})^{-1}$ (see \cite{Gam-1984} or \cite{Dav-1988} for example). More precisely, one can choose
\[
h_0=\mathbf{1} \ \ \ \  \text{and} \ \ \ \ h_1(z)=\mathrm{e}^{\frac{2\mathbf{i}}{\pi}\log\frac{1+z}{1-z}}
\]
as required. By Lemma \ref{trans}, there exists a normalized inner function $\varphi$ such that $\varphi h_0$ and $\varphi h_1$ can be joined by a path $f_t(z)$, $t\in[0,1]$, contained in $\mathfrak{I}^{*}_0$ in the supremum norm.

Notice that $Q_{out}(f_t)$ belongs to $(H^{\infty})^{-1}$ for each $t\in[0,1]$. If $Q_{out}:(\mathfrak{I}^{*}_n, \|\cdot\|_{\infty})\rightarrow (\mathfrak{F}, \|\cdot\|_{\infty})$ is continuous, $Q_{out}(f_t)$ is a path from $h_0$ to $h_1$ in $(H^{\infty})^{-1}$. That is a contradiction.

Now suppose that $Q_{inn}:(\mathfrak{I}^{*}_n, \|\cdot\|_{\infty})\rightarrow (\mathfrak{I}, \|\cdot\|_{\infty})$ is continuous. Then, $Q_{inn}(f_t)$ is a path in $\mathfrak{I}$. Let
\[
C\triangleq\sup\limits_{t\in[0,1]}\|f_t\|_{\infty}=\sup\limits_{t\in[0,1]}\|Q_{out}(f_t)\|_{\infty}.
\]
For any $s,t\in[0,1]$, since
\begin{align*}
f_s-f_t=&u_sF_s-u_tF_t\\
=&u_sF_s-u_tF_s+u_tF_s-u_tF_t\\
=&F_s(u_s-u_t)+u_t(F_s-F_t),
\end{align*}
we have
	\begin{align}\label{innout1}
\|F_s-F_t\|_{\infty}&=\|u_t(F_s-F_t)\|_{\infty} \\ \label{innout2}
&\le\|f_s-f_t\|_{\infty} +\|F_s\|_{\infty} \|u_s-u_t\|_{\infty} \\ \label{innout3}
&\le\|f_s-f_t\|_{\infty} +C\|u_s-u_t\|_{\infty}.
    \end{align}
Then, the continuity of $f_t$ and $u_t=Q_{inn}(f_t)$ implies the continuity of $F_t=Q_{out}(f_t)$. That means $F_t=Q_{out}(f_t)$ is a path from $h_0$ to $h_1$ in $(H^{\infty})^{-1}$, which is a contradiction.
\end{proof}

\begin{theorem}\label{nocrosssection}
There exists a norm continuous family of nontrivial projections $p_t$ such that $p_t(H^2)$ is invariant for each $t$, $t \in [0, 1]$, such that there is no continuous cross section of $p_t$ in $\mathfrak{I}$ in the supremum norm.
\end{theorem}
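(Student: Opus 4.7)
The plan is to transfer the non-existence of a sup-norm continuous choice of inner factor established in Theorem \ref{nocont} to the level of projections. I would start from the concrete path produced in the proof of that theorem: let $h_0 = \mathbf{1}$ and $h_1(z) = \mathrm{e}^{(2\mathbf{i}/\pi)\log\frac{1+z}{1-z}}$ be the two invertible functions known to lie in different path-components of $(H^\infty)^{-1}$, and let $f_t$, $t \in [0,1]$, be the sup-norm continuous path in $\mathfrak{I}^*_0$ joining $\varphi h_0$ and $\varphi h_1$ provided by Lemma \ref{trans}. Define $p_t$ to be the orthogonal projection of $H^2$ onto $f_t H^2 = u_t H^2$, where $u_t = Q_{inn}(f_t)$.

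Next, I would verify that $p_t$ is operator-norm continuous. The gap estimate (\ref{eqpmpn}) gives $\|p_t - p_s\| \le c^{-1}\|f_t - f_s\|_\infty$, and the constant $c$ can be chosen uniformly on $[0,1]$ because the map $t \mapsto \inf_{\xi \in \mathbb{T}} \mathrm{ess}\,|\widetilde{f_t}(\xi)|$ is Lipschitz with respect to $\|\cdot\|_\infty$ and strictly positive on the compact interval. This furnishes the required norm-continuous family of nontrivial projections onto $M_z$-invariant subspaces.

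To rule out a sup-norm continuous cross section $\varphi_t$, I would argue by contradiction. Since $\varphi_t H^2 = u_t H^2$ and both factors are inner, $\varphi_t = \lambda_t u_t$ for some $\lambda_t \in \mathbb{T}$. Because $f_t(0) \neq 0$ forces $u_t(0) \neq 0$, a direct computation yields $p_t \mathbf{1}(0) = |u_t(0)|^2 > 0$, so Lemma \ref{pG} makes $u_t$ continuous in $H^2$-norm. Expanding $(\lambda_t - \lambda_s) u_s = (\varphi_t - \varphi_s) - \lambda_t(u_t - u_s)$ and exploiting $\|u_s\|_2 = 1$ yields
\[
|\lambda_t - \lambda_s| \le \|\varphi_t - \varphi_s\|_\infty + \|u_t - u_s\|_2,
\]
so $\lambda_t$ is continuous in $t$, and consequently $u_t = \overline{\lambda_t}\varphi_t$ is sup-norm continuous. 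Then the inner/outer estimate (\ref{innout1})--(\ref{innout3}) from the proof of Theorem \ref{nocont}, combined with the fact that multiplication by the inner function $u_t$ preserves the supremum norm, forces $F_t = Q_{out}(f_t)$ to be sup-norm continuous, producing a path from $h_0$ to $h_1$ in $(H^\infty)^{-1}$, a contradiction. The main obstacle I anticipate is this bridge from an arbitrary sup-norm continuous cross section to sup-norm continuity of the normalized one; Lemma \ref{pG} together with the scalar estimate above is precisely what closes this gap.
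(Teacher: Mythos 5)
Your proposal is correct and follows the same skeleton as the paper's proof: the same path $f_t$ from $\varphi h_0$ to $\varphi h_1$ supplied by Lemma \ref{trans}, the same projections $p_t$ onto $f_tH^2$ made operator-norm continuous via the gap estimate (\ref{eqpmpn}), and the same final contradiction with the fact that $h_0=\mathbf{1}$ and $h_1=\mathrm{e}^{\frac{2\mathbf{i}}{\pi}\log\frac{1+z}{1-z}}$ lie in different path components of $(H^{\infty})^{-1}$. The one place you diverge is the bridge you flag as the main obstacle: you first prove that the unimodular scalar $\lambda_t$ with $\varphi_t=\lambda_t u_t$ is continuous --- invoking Lemma \ref{pG} (after checking $p_t\mathbf{1}(0)=|u_t(0)|^2>0$) to get $H^2$-continuity of $u_t$ and then the estimate $|\lambda_t-\lambda_s|\le\|\varphi_t-\varphi_s\|_{\infty}+\|u_t-u_s\|_2$ --- so as to transfer sup-norm continuity from $\varphi_t$ to the normalized $u_t$ before applying the factorization estimate. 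The paper shows this detour is unnecessary: it applies the estimate (\ref{innout1})--(\ref{innout3}) directly to the unnormalized factorization $f_t=\varphi_t G_t$, obtaining $\|G_s-G_t\|_{\infty}\le\|f_s-f_t\|_{\infty}+C\|\varphi_s-\varphi_t\|_{\infty}$; the resulting $G_t$ is automatically a continuous path in $(H^{\infty})^{-1}$ joining unimodular multiples of $h_0$ and $h_1$, which is already a contradiction since multiplication by a unimodular constant does not change the path component. Your route costs an extra appeal to Lemma \ref{pG} and an $H^2$ argument but buys a slightly cleaner endpoint identification ($F_0=h_0$, $F_1=h_1$ exactly); both arguments are valid, and your added observation that the constant $c$ in (\ref{eqpmpn}) can be taken uniform over the compact parameter interval is a worthwhile precision that the paper leaves implicit.
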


\begin{proof}
Similar to the proof of the previous theorem, let
\[
h_0=\mathbf{1} \ \ \ \  \text{and} \ \ \ \ h_1(z)=\mathrm{e}^{\frac{2\mathbf{i}}{\pi}\log\frac{1+z}{1-z}},
\]
and let $f_t(z)$, $t\in[0,1]$, be a path from $\varphi h_0$ to $\varphi h_1$ in $\mathfrak{I}^{*}_0$ for some certain normalized inner function $\varphi$. Furthermore, let $\mathfrak{M}_t$ be the subspace $f_t\mathbb{H}^2$, and let $p_t$ be the orthogonal projection onto $\mathfrak{M}_t$. Following from the inequality (\ref{eqpmpn}), the continuity of $f_t$ in the supremum norm implies the continuity of $p_t$ in the operator norm.

Suppose that $\varphi_t(z)$ is a supremum norm continuous cross section of $p_t$ in $\mathfrak{I}$, which is not necessary to be normalized. Write $f_t(z)=\varphi_t(z)G_t(z)$ for $t\in[0,1]$. As the same as inequalities (\ref{innout1})-(\ref{innout3}), one can see for any $s,t\in[0,1]$,
\[
\|G_s-G_t\|_{\infty}\le\|f_s-f_t\|_{\infty} +C\|\varphi_s-\varphi_t\|_{\infty},
\]
where $C=\sup\limits_{t\in[0,1]}\|f_t\|_{\infty}$ is a positive constant. Then, $G_t$ is a path from $h_0$ to $h_1$ in $(H^{\infty})^{-1}$, which is a contradiction. Therefore, there is no continuous cross section of such $p_t$ in $\mathfrak{I}$ in the supremum norm.
\end{proof}

\begin{remark}
The above Theorem \ref{nocrosssection} tells us that the supremum norm is quite different from $\mathbb{H}^2$-norm. Replace $\mathbb{H}^2$-norm by the supremum norm in Lemma \ref{pG}, the conclusion will no longer hold true, even if the normalized restriction is removed.
\end{remark}

\section{On analytic functions in the closed disk}

In this section, we will consider $\textrm{Hol}(\overline{\mathbb{D}})$ instead of $\mathfrak{I}^{*}$. For any nonnegative integer $n$, denote
\[
\textrm{Hol}_n(\overline{\mathbb{D}})=\{f\in\textrm{Hol}(\overline{\mathbb{D}}); \textrm{Mul}_0(f)=n\}.
\]

\begin{theorem}
	Given any nonnegative integer $n$. The map $Q_{out}:(\textrm{Hol}_n(\overline{\mathbb{D}}), \ \|\cdot\|_{\infty})\rightarrow (\mathfrak{F}, \|\cdot\|_{\infty})$ is continuous in the supremum norm, but not the map $Q_{inn}:(\textrm{Hol}_n(\overline{\mathbb{D}}), \|\cdot\|_{\infty})\rightarrow (\mathfrak{I}, \|\cdot\|_{\infty})$. Both $Q_{inn}:(\textrm{Hol}_n(\overline{\mathbb{D}}), \|\cdot\|_{\infty})\rightarrow (\mathfrak{I}, \|\cdot\|_{2})$ and $Q_{out}:(\textrm{Hol}_n(\overline{\mathbb{D}}), \|\cdot\|_{\infty})\rightarrow (\mathfrak{F}, \|\cdot\|_{2})$ are continuous in the $H^{2}$-norm.
\end{theorem}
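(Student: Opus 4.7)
The plan is to address the three assertions in turn. First I reduce to $n=0$ via $\textrm{Hol}_n(\overline{\mathbb{D}}) = z^n\cdot \textrm{Hol}_0(\overline{\mathbb{D}})$, as in Theorem \ref{cont}. For the supremum-norm discontinuity of $Q_{inn}$ I adapt Example \ref{I*n}: taking $f(z) = 1-z$ and $f_m(z) = 1-(1+1/m)z$, both in $\textrm{Hol}_0(\overline{\mathbb{D}})$, one has $\|f_m-f\|_\infty = 1/m\to 0$, while $Q_{inn}(f)=1$ (since $f$ is outer) and $Q_{inn}(f_m)$ is the normalized Blaschke factor at $a_m = m/(m+1)\in\mathbb{D}$; a direct computation gives $\|Q_{inn}(f_m)-1\|_\infty = 2$ for every $m$.

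For the supremum-norm continuity of $Q_{out}$, which is the main technical step, assume $f_m, f \in \textrm{Hol}_0(\overline{\mathbb{D}})$ with $\|f_m - f\|_\infty \to 0$. Since $f \in \textrm{Hol}(\overline{\mathbb{D}})$ has only finitely many zeros in $\overline{\mathbb{D}}$, its outer factor $F = Q_{out}(f)$ is itself in $\textrm{Hol}(\overline{\mathbb{D}})$, and similarly $F_m = Q_{out}(f_m)$. By the maximum modulus principle $\|F_m - F\|_\infty = \sup_{\mathbb{T}}|F_m - F|$, and on $\mathbb{T}$ the boundary moduli $|F_m| = |f_m|$ and $|F| = |f|$ converge uniformly. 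I will use the Herglotz--Poisson representation
\[
F(z) = c_F\,\exp\Bigl(\frac{1}{2\pi}\int_{0}^{2\pi}\frac{e^{\mathbf{i}t}+z}{e^{\mathbf{i}t}-z}\log|f(e^{\mathbf{i}t})|\,dt\Bigr)
\]
(and its analogue for $F_m$), the unimodular constants $c_F$, $c_{F_m}$ being fixed continuously by the normalization $u^{(n_0)}(0)>0$. The key intermediate step is $\log|f_m|\to\log|f|$ in $L^{1}(\mathbb{T})$, proved by splitting $\mathbb{T}$ into the complement of small neighborhoods of the finitely many boundary zeros of $f$ (on which $|f|$ is bounded below and the convergence of logarithms is uniform) and such neighborhoods, on which Rouché's theorem controls the total multiplicity of the zeros of $f_m$ and the elementary estimate $\int_{|\theta|\le\varepsilon}|\log|\theta||\,d\theta = O(\varepsilon\log(1/\varepsilon))$ bounds the contributions of both $\log|f|$ and $\log|f_m|$. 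This $L^{1}$-convergence yields pointwise convergence $F_m(z)\to F(z)$ on $\mathbb{D}$; combined with uniform convergence of the moduli on $\mathbb{T}$ and uniform boundedness of $\{F_m\}$ in $H^\infty$, one upgrades this to $\|F_m - F\|_\infty \to 0$. The main obstacle here is the uniform estimate near the boundary zeros of $F$, where the inner factors $u_m$ fail to converge in supremum norm (by the discontinuity just established) but the outer factors are forced to compensate through the modulus identity $|F_m| = |f_m|$.

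For the $H^{2}$-continuity, $Q_{out}$ inherits continuity in $H^{2}$-norm from the previous paragraph via $\|\cdot\|_2\le\|\cdot\|_\infty$. For $Q_{inn}$, the identity $(u_m - u)F_m = (f_m - f) - u(F_m - F)$ together with $|u|=1$ on $\mathbb{T}$ yields
\[
\|(u_m - u)F_m\|_2 \le \|f_m - f\|_\infty + \|F_m - F\|_\infty \to 0.
\]
To transfer this weighted estimate to $\|u_m - u\|_2\to 0$, I split $\int_{\mathbb{T}}|u_m - u|^2\,d\theta$ into a small neighborhood $A_\delta$ of the (finitely many) boundary zeros of $f$ and its complement. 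On $A_\delta$ the integrand is bounded by $4$ and the measure $|A_\delta|$ can be made arbitrarily small; on $\mathbb{T}\setminus A_\delta$ the modulus $|F_m| = |f_m|$ is bounded below by a positive constant for $m$ large, so $\int_{\mathbb{T}\setminus A_\delta}|u_m - u|^2\,d\theta$ is controlled by $\|(u_m - u)F_m\|_2^2\to 0$. Combining the two pieces yields $\|u_m - u\|_2\to 0$, completing the proof.
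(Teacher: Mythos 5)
Your reduction to $n=0$, your discontinuity example for $Q_{inn}$ in the supremum norm, and your derivation of the $H^2$-statements from the supremum-norm continuity of $Q_{out}$ are all fine (the weighted estimate $\|(u_m-u)F_m\|_2\to 0$ plus the splitting of $\mathbb{T}$ near the boundary zeros is a clean alternative to the paper's direct computation with the Taylor coefficients of the extra Blaschke factor). The problem is the central step, the supremum-norm continuity of $Q_{out}$, where your argument has a genuine gap: the final ``upgrade'' from (i) pointwise convergence $F_m\to F$ on $\mathbb{D}$, (ii) uniform convergence of the moduli on $\mathbb{T}$, and (iii) uniform boundedness in $H^\infty$, to $\|F_m-F\|_\infty\to 0$ is asserted, not proved, and the implication is false as a general principle for outer functions. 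Take $F=1$ and let $F_m$ be the outer function with boundary modulus $\exp(\epsilon_m\mathbf{1}_{E_m})$ for arcs $E_m$ and $\epsilon_m\to 0$: then $|F_m|\to 1$ uniformly on $\mathbb{T}$, $\|F_m\|_\infty\le e^{\epsilon_m}$, $F_m\to 1$ locally uniformly on $\mathbb{D}$, yet the harmonic conjugate of $\epsilon_m\mathbf{1}_{E_m}$ is unbounded near the endpoints of $E_m$, so $\arg F_m$ winds through all values and $\|F_m-1\|_\infty\ge 2-o(1)$. The modulus identity $|F_m|=|f_m|$ on $\mathbb{T}$, which you invoke to ``force the outer factors to compensate,'' controls only $|F_m|$ and says nothing about $\arg F_m$ near the boundary zeros, which is exactly where the danger lies; you flag this as ``the main obstacle'' but do not overcome it.

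What actually closes this gap, and what the paper does, is to exploit the finiteness of the zero set of $f\in\textrm{Hol}(\overline{\mathbb{D}})$: write $f=B_f\cdot F_0\cdot\prod_s(z_s-z)$ with $F_0\in(H^\infty)^{-1}$ and $z_s\in\partial\mathbb{D}$, use Rouch\'e to track each perturbed zero $\widetilde{z_s}$ of $g$, and observe that when $\widetilde{z_s}$ falls inside $\mathbb{D}$ the linear factor $(\widetilde{z_s}-z)$ contributes to the outer part the explicit function $\frac{|\widetilde{z_s}|}{\overline{\widetilde{z_s}}}(1-\overline{\widetilde{z_s}}z)$, which converges to $(z_s-z)$ \emph{in the supremum norm} as $\widetilde{z_s}\to z_s$; the invertible part $G_0$ is then handled by an algebraic estimate using $\|B_g h\|_\infty=\|h\|_\infty$. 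Your Herglotz/$L^1(\log)$ route can only be salvaged by injecting this same explicit factor-by-factor information, at which point it reduces to the paper's argument; as written, the supremum-norm continuity of $Q_{out}$ (and hence, through your dependence on it, the $H^2$-continuity of $Q_{inn}$) is not established.
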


\begin{proof}	
Notice that for any nonnegative integer $n$,
\[
\textrm{Hol}_n(\overline{\mathbb{D}})=\{z^nf; f\in\textrm{Hol}_0(\overline{\mathbb{D}})\}.
\]
It suffices to consider the case of $n=0$.

{\bf (1) The continuity of $Q_{out}:(\textrm{Hol}_0(\overline{\mathbb{D}}), \ \|\cdot\|_{\infty})\rightarrow (\mathfrak{F}, \|\cdot\|_{\infty})$}

Given any element $f$ in $\textrm{Hol}_0(\overline{\mathbb{D}})$. Then $f$ has finite zero points in $\overline{\mathbb{D}}\setminus \{0\}$. More precisely, $f$ has zero points $\{\alpha_{k}\}_{k=1}^{n}$ in $\mathbb{D}$ and $\{z_{s}\}_{s=1}^{m}$ in $\partial \mathbb{D}$. Consequently, $f$ has a canonical decomposition,
	\begin{equation*}
		f=B_f F_f=\prod\limits_{k=1}^{n}\frac{|\alpha_{k}|}{\alpha_{k}}\frac{\alpha_{k}-z}{1-\overline{\alpha_{k}}z}\cdot F_0 \cdot\prod\limits_{s=1}^{m}(z_{s}-z),
	\end{equation*}
	in which $Q_{inn}(f)=B_f=\prod\limits_{k=1}^{n}\frac{|\alpha_{k}|}{\alpha_{k}}\frac{\alpha_{k}-z}{1-\overline{\alpha_{k}}z}$ is the normalized inner function part of $f$ and $Q_{out}(f)=F_f=F_0 \prod\limits_{s=1}^{m}(z_{s}-z)$ is the outer function part of $f$. Denote $F_1=\prod\limits_{s=1}^{m}(z_{s}-z)$ and then $F_f=F_0 F_1$. Since $F_{0}$ is an invertible element in $H^{\infty}$, there exist two positive numbers $C_1$ and $C_2>1$ such that
	\begin{equation*}
		0<C_{1}\le\|F_0\|_{\infty}\le C_{2}<+\infty.
	\end{equation*}

	By Rouch\'{e}'s theorem, if $g\in \textrm{Hol}_{0}(\overline{\mathbb{D}})$ is a small perturbation of $f$, then $g$ and $f$ have the same number of zeros in a small neighborhood of $\overline{\mathbb{D}}$. In fact, each zero point of $g$ is close to a zero point of $f$.
	Furthermore, we may assume that $g$ has zero points $\{\widetilde{\alpha_{k}}\}_{k=1}^{n}$ in $\mathbb{D}$,  $\{\widetilde{z_{s}}\}_{s=1}^{m_1}$ in $\mathbb{D}$ and $\{\widetilde{z_{s}}\}_{s=m_1 +1}^{m}$ out of $\mathbb{D}$, which are close to $\{\alpha_{k}\}_{k=1}^{n}$, $\{z_{s}\}_{s=1}^{m_1}$ and $\{z_{s}\}_{s=m_1+1}^{m}$, respectively. Then, denote
	\begin{equation*}
		\widetilde{B}_{f}=\prod_{k=1}^{n}\frac{|\widetilde{\alpha_{k}}|}{\widetilde{\alpha_{k}}}\frac{\widetilde{\alpha_{k}}-z}{1-\overline{\widetilde{\alpha_{k}}}z},
	\end{equation*}
	\begin{equation*}
		G^{inn}_{F_1}=\prod_{s=1}^{m_1}(\widetilde{z_{s}}-z),
	\end{equation*}
	and
	\begin{equation*}
		G^{out}_{F_1}=\prod_{s=m_1 +1}^{m}(\widetilde{z_{s}}-z).
	\end{equation*}
	Moreover, denote
	\begin{equation*}
		\widehat{G^{inn}_{F_1}}=\prod_{s=1}^{m_1}\frac{|\widetilde{z_{s}}|}{\overline{\widetilde{z_{s}}}}(1-\overline{\widetilde{z_{s}}}z)
	\end{equation*}
	and
	\begin{equation*}	\widetilde{B}_{F}=\frac{G^{inn}_{F_1}}{\widehat{G^{inn}_{F_1}}}=\prod_{s=1}^{m_1}\frac{\overline{\widetilde{z_{s}}}}{|\widetilde{z_{s}}|}\frac{(\widetilde{z_{s}}-z)}{(1-\overline{\widetilde{z_{s}}}z)}.
	\end{equation*}
	Then, $Q_{inn}(g)=B_g=\widetilde{B}_{f}\widetilde{B}_{F}$ is the normalized inner function part of $g$ and $Q_{out}(g)=F_g=\widehat{G^{inn}_{F_1}}G^{out}_{F_1}G_0$ is the outer function part of $g$, in which $G_0$ is an invertible element in $H^{\infty}$. So one could write $g$ as follows
	\begin{align*}
		g&=B_gF_g\\
		&=\widetilde{B}_{f}\widetilde{B}_{F}\widehat{G^{inn}_{F_1}}G^{out}_{F_1}G_0\\
		&=\prod_{k=1}^{n}\frac{|\widetilde{\alpha_{k}}|}{\widetilde{\alpha_{k}}}\frac{\widetilde{\alpha_{k}}-z}{1-\overline{\widetilde{\alpha_{k}}}z}\prod_{s=1}^{m_1}\frac{\overline{\widetilde{z_{s}}}}{|\widetilde{z_{s}}|}\frac{(\widetilde{z_{s}}-z)}{(1-\overline{\widetilde{z_{s}}}z)}\prod_{s=1}^{m_1}\frac{|\widetilde{z_{s}}|}{\overline{\widetilde{z_{s}}}}(1-\overline{\widetilde{z_{s}}}z)\prod_{s=m_1 +1}^{m}(\widetilde{z_{s}}-z)G_0.
	\end{align*}

	
	Given any $\epsilon>0$. Let
	\begin{equation*}
		M=\epsilon+\max\{\|\prod\limits_{s=1}^{m_1}(z_{s}-z)\|_{\infty}, \ \|\prod\limits_{s=m_1+1}^{m}(z_{s}-z)\|_{\infty}\}.
	\end{equation*} 	
	Since the zero points of $g$ tend to the zero points of $f$ when $g$ tends to $f$ in the supremum norm, there exists a positive number $\delta<\frac{\epsilon}{2}$ such that if $g\in \textrm{Hol}_0(\overline{\mathbb{D}})$ with $\|f-g\|\le \delta$,
	then each $\widetilde{\alpha_{k}}$ is sufficiently close to $\alpha_{k}$ such that
	\begin{equation}\label{1}
		\|B_f-\widetilde{B}_f\|_{\infty}\le\frac{\epsilon}{2C_2(M^2+4M)},
	\end{equation}
	and each $\widetilde{z_{s}}$ is sufficiently close to $z_s$ such that
	\begin{align}
		&\|G^{inn}_{F_1}-\prod\limits_{s=1}^{m_1}(z_{s}-z)\|_{\infty}\le\frac{\epsilon}{2C_2(M^2+4M)},  \label{2}\\
		&\|\widehat{G^{inn}_{F_1}}-\prod_{s=1}^{m_1}\frac{|z_{s}|}{\overline{z_{s}}}(1-\overline{z_{s}}z)\|_{\infty}\le\frac{\epsilon}{2C_2(M^2+4M)},  \label{3}\\
		&\|G^{out}_{F_1}-\prod\limits_{s=m_1 +1}^{m}(z_{s}-z)\|_{\infty}\le\frac{\epsilon}{2C_2(M^2+4M)}. \label{4}
	\end{align}
	Notice that $z_s\in\partial \mathbb{D}$, for $s=1, \ldots, m$. Then, it is easy to see that
	\begin{align*}
		\frac{|z_{s}|}{\overline{z_{s}}}(1-\overline{z_{s}}z)=\frac{1}{\overline{z_{s}}}(\overline{z_{s}}z_s-\overline{z_{s}}z)=z_s-z,
	\end{align*}
	and consequently we could rewrite the inequality (\ref{3}) as
	\begin{align}\label{5}
		\|\widehat{G^{inn}_{F_1}}-\prod_{s=1}^{m_1}(z_s-z)\|_{\infty}\le\frac{\epsilon}{2C_2(M^2+4M)}.
	\end{align}
Following from  the inequalities (\ref{2}), (\ref{5}) and (\ref{4}), one can see that
\[
\|G^{inn}_{F_1}\|\le \|\prod\limits_{s=1}^{m_1}(z_{s}-z)\|_{\infty}+\frac{\epsilon}{2C_2(M^2+4M)}\le M,
\]
\[
\|\widehat{G^{inn}_{F_1}}\|_{\infty}\le M \ \ \ \text{and} \ \ \ \|G^{out}_{F_1}\|_{\infty}\le M.
\]
Furthermore, together with inequality (\ref{1}), we have
\begin{align*}
	&\|F_1-\widehat{G^{inn}_{F_1}}G^{out}_{F_1}\|_{\infty}\\
	=&\|\prod\limits_{s=1}^{m_1}(z_{s}-z)\prod\limits_{s=m_1+1}^{m}(z_{s}-z)-\widehat{G^{inn}_{F_1}}G^{out}_{F_1}\|_{\infty}\\
	\le&\|\prod\limits_{s=1}^{m_1}(z_{s}-z)\prod\limits_{s=m_1+1}^{m}(z_{s}-z)-\prod\limits_{s=1}^{m_1}(z_{s}-z)G^{out}_{F_1}\|_{\infty}+\|\prod\limits_{s=1}^{m_1}(z_{s}-z)G^{out}_{F_1}-\widehat{G^{inn}_{F_1}}G^{out}_{F_1}\|_{\infty}\\
	\le&\|\prod\limits_{s=1}^{m_1}(z_{s}-z)\|_{\infty}\|\prod\limits_{s=m_1+1}^{m}(z_{s}-z)-G^{out}_{F_1}\|_{\infty}+\|G^{out}_{F_1}(z)\|_{\infty}\|\prod\limits_{s=1}^{m_1}(z_{s}-z)-\widehat{G^{inn}_{F_1}}\|_{\infty}\\
	\le& \frac{2M\epsilon}{2C_2(M^2+4M)}
\end{align*}
and similarly,
\begin{align*}
	&\|F_1-{G^{inn}_{F_1}}G^{out}_{F_1}\|_{\infty}\\
	\le&\|\prod\limits_{s=1}^{m_1}(z_{s}-z)\|_{\infty}\|\prod\limits_{s=m_1+1}^{m}(z_{s}-z)-G^{out}_{F_1}\|_{\infty}+\|G^{out}_{F_1}(z)\|_{\infty}\|\prod\limits_{s=1}^{m_1}(z_{s}-z)-{G^{inn}_{F_1}}\|_{\infty}\\
	\le& \frac{2M\epsilon}{2C_2(M^2+4M)}.
\end{align*}
Consequently,
	\begin{align*}
		&\|B_f F_1-\widetilde{B}_f G^{inn}_{F_1} G^{out}_{F_1}\|_{\infty}\\
		\le&\|B_f F_1-B_f G^{inn}_{F_1} G^{out}_{F_1}\|_{\infty}+\|B_f G^{inn}_{F_1} G^{out}_{F_1}-\widetilde{B}_f G^{inn}_{F_1} G^{out}_{F_1}\|_{\infty}\\
		\le&\|B_f\|_{\infty}\|F_1-G^{inn}_{F_1} G^{out}_{F_1}\|_{\infty}+\|G^{inn}_{F_1} G^{out}_{F_1}\|_{\infty}\|B_f-\widetilde{B}_f\|_{\infty}\\
		=&\|F_1-G^{inn}_{F_1} G^{out}_{F_1}\|_{\infty}+\|G^{inn}_{F_1}\|_{\infty} \|G^{out}_{F_1}\|_{\infty}\|B_f-\widetilde{B}_f\|_{\infty}\\
		\le&\frac{(M^2+2M)\epsilon}{2C_2(M^2+4M)}.
	\end{align*}
In addition, we have
	\begin{align*}
		f-g&=B_f F_1 F_0-B_g \widehat{G^{inn}_{F_1}}G^{out}_{F_1}G_0\\
		&=(B_f F_1 F_0-B_g \widehat{G^{inn}_{F_1}}G^{out}_{F_1}F_0)+(B_g \widehat{G^{inn}_{F_1}}G^{out}_{F_1}F_0-B_g \widehat{G^{inn}_{F_1}}G^{out}_{F_1}G_0)\\
	\end{align*}
	then
	\begin{align*}
		&\| \widehat{G^{inn}_{F_1}}G^{out}_{F_1}F_0- \widehat{G^{inn}_{F_1}}G^{out}_{F_1}G_0\|_{\infty}\\
		=&\|B_g(\widehat{G^{inn}_{F_1}}G^{out}_{F_1}F_0- \widehat{G^{inn}_{F_1}}G^{out}_{F_1}G_0)\|_{\infty}\\
		=&\|(f-g)-(B_f F_1 F_0-B_g \widehat{G^{inn}_{F_1}}G^{out}_{F_1}F_0)\|_{\infty}\\
		\le&\|f-g\|_{\infty}+\|F_0\|_{\infty}\|B_f F_1-B_g \widehat{G^{inn}_{F_1}}G^{out}_{F_1}\|_{\infty}\\
		\le& \delta+C_2\cdot \frac{(M^2+2M)\epsilon}{2C_2(M^2+4M)} \\
		\le&\frac{\epsilon}{2}+\frac{(M^2+2M)\epsilon}{2(M^2+4M)}.
	\end{align*}
	Therefore,
	\begin{align*}
		&\|F_f-F_g\|_{\infty}\\
		=&\|F_1 F_0-\widehat{G^{inn}_{F_1}}G^{out}_{F_1}G_0\|_{\infty}\\
		=&\|F_1 F_0-\widehat{G^{inn}_{F_1}}G^{out}_{F_1}F_0+\widehat{G^{inn}_{F_1}}G^{out}_{F_1}F_0-\widehat{G^{inn}_{F_1}}G^{out}_{F_1}G_0\|_{\infty}\\
		\le&\|F_0\|_{\infty}\|F_1 -\widehat{G^{inn}_{F_1}}G^{out}_{F_1}\|_{\infty}+\|\widehat{G^{inn}_{F_1}}G^{out}_{F_1}F_0-\widehat{G^{inn}_{F_1}}G^{out}_{F_1}G_0\|_{\infty}\\
		\le&C_2\cdot \frac{2M\epsilon}{2C_2(M^2+4M)}+\frac{\epsilon}{2}+\frac{(M^2+2M)\epsilon}{2(M^2+4M)}\\
		=&\epsilon.
	\end{align*}
	So $f\to g$ in the supremum norm implies $F_f\to F_g$ in the supremum norm, which means the map $Q_{out}:(\textrm{Hol}_n(\overline{\mathbb{D}}), \|\cdot\|_{\infty})\rightarrow (\mathfrak{F}, \|\cdot\|_{\infty})$ is continuous.

{\bf (2) The discontinuity of $Q_{inn}:(\textrm{Hol}_0(\overline{\mathbb{D}}), \ \|\cdot\|_{\infty})\rightarrow (\mathfrak{I}, \|\cdot\|_{\infty})$}	

	However, the map $Q_{inn}:(\textrm{Hol}_0(\overline{\mathbb{D}}), \|\cdot\|_{\infty})\rightarrow (\mathfrak{I}, \|\cdot\|_{\infty})$ is not continuous in the supremum norm. Here we give an example to show the discontinuity. Let $f_t(z)=t-z$ for $t\in[\frac{1}{2},2]$. Then $f_t$ is a continuous path in $\textrm{Hol}(\overline{\mathbb{D}})$. The inner part of $f_1$ is $Q_{inn}(f_1)=1$ and the inner part of $f_t$, for $t\in[\frac{1}{2},1)$, is $Q_{inn}(f_t)=\frac{t-z}{1-tz}$. It is easy to see that as $t\to 1$,
	\[
	\|f_t-f_1\|_{\infty}=|1-t|\rightarrow 0
	\]
	but	
	\[
	\|Q_{inn}(f_t)-Q_{inn}(f_1)\|_{\infty}=\|\frac{t-z}{1-tz}-1\|_{\infty}=2\nrightarrow 0.
	\]

{\bf (3) The continuity of  $Q_{inn}:(\textrm{Hol}_n(\overline{\mathbb{D}}), \|\cdot\|_{\infty})\rightarrow (\mathfrak{I}, \|\cdot\|_{2})$}
	
Next, we want to prove that $Q_{inn}:(\textrm{Hol}_n(\overline{\mathbb{D}}), \|\cdot\|_{\infty})\rightarrow (\mathfrak{I}, \|\cdot\|_{2})$ is continuous in the $H^{2}$-norm, which is different from the case in the supremum norm.

Recall that $Q_{inn}(g)=B_g=\widetilde{B}_{f}\widetilde{B}_{F}$ and $Q_{inn}(f)=B_f$. Notice that if $g$ tends to $f$ in the supremum norm, then each $\widetilde{\alpha_{k}}$ is sufficiently close to $\alpha_{k}$. More precisely, there exists a positive number $\delta'<\frac{\epsilon}{2}$ such that if $g\in \textrm{Hol}_0(\overline{\mathbb{D}})$ with $\|f-g\|\le \delta'$, we have
\begin{equation}\label{6}
\|B_f-\widetilde{B}_f\|_{2}\le\frac{\epsilon}{2}
\end{equation}
and by $z_s\in \mathbb{T}$ for $s=1,2,\ldots,m_1$,
\begin{equation}\label{7}
|\prod\limits_{s=1}^{m_1}|\widetilde{z_{s}}|-1|=|\prod\limits_{s=1}^{m_1}|\widetilde{z_{s}}|-\prod\limits_{s=1}^{m_1}|z_{s}||\le\frac{\epsilon^2}{8}.
\end{equation}

Write $\widetilde{B}_{F}$ as follows
\[
\widetilde{B}_{F}(z)=\sum_{k=0}^{\infty}c_k z^k,
\]
where $c_k$ is the $k$-th Taylor coefficient of $\widetilde{B}_{F}$.

Since $\widetilde{B}_{F}(z)=\prod_{s=1}^{m_1}\frac{\overline{\widetilde{z_{s}}}}{|\widetilde{z_{s}}|}\frac{(\widetilde{z_{s}}-z)}{(1-\overline{\widetilde{z_{s}}}z)}$ is an inner function, it is not difficult to see that
\begin{align*}
\|\widetilde{B}_{F}-1\|_2 ^2&=\|\sum_{k=0}^{\infty}c_k z^k-1\|_2 ^2\\
&=\|c_0 -1+\sum_{k=1}^{\infty}c_k z^k\|_2 ^2\\
&\le |c_0 -1|^2+|\sum_{k=1}^{\infty}c_k|^2\\
&=|c_0 -1|^2+\sum_{k=0}^{\infty}|c_k| ^2-|c_0|^{2}\\
&=|\prod\limits_{s=1}^{m_1}|\widetilde{z_{s}}|-1|^2+1-\left(\prod\limits_{s=1}^{m_1}|\widetilde{z_{s}}|\right)^2 \\
&\le \frac{\epsilon^4}{64}+1-(1-\frac{\epsilon^2}{8})^2 \\
&=\frac{\epsilon^2}{4},
\end{align*}
that is
\begin{equation}\label{8}
\|\widetilde{B}_{F}-1\|_2 <\frac{\epsilon}{2}.
\end{equation}
Consequently, it follows from inequalities (\ref{6}) and (\ref{8}) that
\begin{align*}
	\|B_f-B_g\|_{2}&=\|B_f-\widetilde{B}_{f}\widetilde{B}_{F}\|_2\\
	&=\|B_f-B_f \widetilde{B}_{F}+B_f \widetilde{B}_{F}-\widetilde{B}_{f}\widetilde{B}_{F}\|_2\\
	&\le\|B_f-B_f \widetilde{B}_{F}\|_2+\|B_f \widetilde{B}_{F}-\widetilde{B}_{f}\widetilde{B}_{F}\|_2\\
	&=\|\widetilde{B}_{F}-1\|_2+\|B_f-\widetilde{B}_f\|_{2}\\
	&<\frac{\epsilon}{2}+\frac{\epsilon}{2}\\
	&=\epsilon
\end{align*}
So $f\to g$ in the $H^{2}$-norm implies $B_f\to B_g$ in the $H^{2}$-norm, which means the map $Q_{inn}:(\textrm{Hol}_n(\overline{\mathbb{D}}), \|\cdot\|_{\infty})\rightarrow (\mathfrak{I}, \|\cdot\|_{2})$ is continuous.

{\bf (4) The continuity of $Q_{out}:(\textrm{Hol}_n(\overline{\mathbb{D}}), \|\cdot\|_{\infty})\rightarrow (\mathfrak{F}, \|\cdot\|_{2})$}

At the end, since $Q_{out}:(\textrm{Hol}_n(\overline{\mathbb{D}}), \|\cdot\|_{\infty})\rightarrow (\mathfrak{F}, \|\cdot\|_{\infty})$ is continuous in the supremum norm, it is obvious that $Q_{out}:(\textrm{Hol}_n(\overline{\mathbb{D}}), \|\cdot\|_{\infty})\rightarrow (\mathfrak{F}, \|\cdot\|_{2})$ is also continuous in the $H^{2}$-norm.
\end{proof}

\section*{Declarations}

\noindent \textbf{Ethics approval}

\noindent Not applicable.

\noindent \textbf{Competing interests}

\noindent The author declares that there is no conflict of interest or competing interest.

\noindent \textbf{Authors' contributions}

\noindent All authors contributed equally to this work.

\noindent \textbf{Funding}

\noindent There is no funding source for this manuscript.

\noindent \textbf{Availability of data and materials}

\noindent Data sharing is not applicable to this article as no data sets were generated or analyzed during the current study.

\end{document}